\def\indic{{\rm {\large 1}\hspace{-2.3pt}{\large
l}}}
\theoremstyle{definition}
\newtheorem{theorem}{Theorem}
\newtheorem{rmrk}{Remark}
\newtheorem{algorithm}{Algorithm}
\begin{document}
\title[]{Relaxing Monotonicity in Endogenous Selection Models and Application to Surveys}

\author[Gautier]{Eric Gautier$^{(1)}$}
\email{\href{mailto:eric.gautier@tse-fr.eu}{eric.gautier@tse-fr.eu}}
\date{This version: \today.} 

\thanks{\emph{Keywords}: 
Ill-posed Inverse Problem, Missing data in surveys, Selection models.}
\thanks{\emph{MSC 2020 Subject Classification}: Primary 62D10 ; secondary
62G08.}
\thanks{The authors acknowledge financial support from the grants ERC POEMH 337665 and ANR-17-EURE-0010. The material of this paper was presented at the $7^{\mathrm{eme}}$ Colloque Francophone sur les Sondages at ENSAI in 2012.}

\begin{abstract}
This paper considers endogenous selection models, in particular nonparametric ones. 
Estimating the unconditional law of the 
outcomes 
is possible when one uses instrumental variables. Using a selection equation which is additively separable in a one dimensional unobservable has the sometimes undesirable property of instrument monotonicity. We present models which allow for nonmonotonicity and are based on nonparametric random coefficients indices. We discuss their nonparametric identification and apply these results to inference on nonlinear statistics such as the Gini index in surveys when the nonresponse is not missing at random.
\end{abstract}
\maketitle

\section{Introduction}
Empirical researchers often face a missing data problem. This is also called selection. Due to missing data, the observed data on an outcome variable corresponds to draws from the law of the outcome conditional on nonmissingness. Most of the time, the law of interest is the unconditional one. But the researcher can also be interested in the law of the outcome variable for the population that does not reveal the value of the outcome. For example surveys rely on a sample drawn at random and the estimators require the observation of all sampled units. In practice there is missing data and those estimators cannot be computed. A common practice is to rely on imputations. This means that the missing outcomes are replaced by artificial ones so that the estimator can eventually be computed. In the presence of endogenous selection, the law conditional on nonselection is the important one for imputation. 

It is usual to assume that the data is Missing at Random (henceforth MAR, see \cite{LR}) in which case there are perfectly observed variables such that the law of the outcome conditional on them and selection is the same as the law of outcome conditional on them and nonselection. Under such an assumption, the estimable conditional law is the same as the one which is unconditional on selection. As a consequence the researcher does not need a model for the joint law of the outcome and selection and the selection can be ignored. In survey sampling, the sampling frame can be based on variables available for the whole population for example if it involves stratification. In this case those variables are natural candidates for conditioning variables for MAR to hold.  
In practice there is noncompliance. It means that the researcher often does not have observations for all sampled units. This is called missing data in survey statistics. Though the original sampling law is known, the additional layer of missing data can be viewed as an additional selection mechanism conditional on the first one. The law of this second selection mechanism is unknown to the statistician. Oftentimes it can be suspected that units reveal the value of a variable partly depending on the value of that variable and the MAR assumption does not hold. This is a type of endogeneity issue commonly studied in econometrics. For example, wages are only observed for those who work and those who do not work might prefer not to work becuase their wage would be too low. Firms only carry out investment decisions if the net discounted value is nonnegative. An individual might be less willing to answer a question on his salary because it is not a typical one (either low or high). We expect a strong heterogeneity in the mechanism that drives individuals to not reveal the value of a variable.   

When the MAR assumption no longer holds, the selection mechanism cannot be ignored. Identification of the  distribution unconditional on selection or the distribution conditional on nonselection usually relies on the specification of a model for the vector formed by the outcome and a binary variable for selection. The alternative approach is to follow the partial identification route and recognize that the parameters of interest which are functionals of these distributions lie in sets. 
 The Tobit and generalized Tobit models (also called Heckman selection model, see \cite{H}) are classical parametric selection models to handle endogenous selection. The generalized Tobit model involves a system of two equations: one for the outcome and one for the selection. Each of these equations involve an  error term and these errors are dependent, hence endogeneity. Identification in such systems relies on some variables which appear in the selection equation and are not measurable with respect to the sigma-field generated by the variables in the outcome equation and which do not have an effect on the errors. So these variables have an effect on the selection but not on the outcome. They are called instrumental variables or instruments. 
 
This paper presents nonparametric models in sections \ref{s2} and \ref{s3}. We explain in Section \ref{s3} that having a one dimensional error term appearing in an additively separable form in the selection equation implies so-called instrument monotonicity. Instrument monotonicity has been introduced in \cite{IA}. It has a strong identification power but at the same time leads to unrealistic selection equations as we detail in Section \ref{s3}. To overcome this issue, we present in Section \ref{RC} selection equations where the error in the selection equation is multidimensional and appears in a non additively separable fashion. The baseline specification is a model where the selection equation involves an index with random coefficients. We show that we can rely on a nonparametric models for these random coefficients. Finally, Section \ref{sec:1} presents a method to obtain a confidence interval around a nonlinear statistic like the Gini index with survey data in the presence of non MAR missing data when we suspect that some instruments are nonmonotonic. These confidence intervals account for both the uncertainty due to survey sampling and the one due to missing data.

\section{Preliminaries}\label{s1}
\subsection{Notations}
Bold letters are used for vectors and matrices and capital letters for random elements. In the presence of an identically distributed sample we add an index $i$ for the marginal random element of index $i$. $1\{\cdot\}$ denotes the indicator function, $\partial_p$ the derivative with respect to the variable $p$, $\langle\cdot,\star\rangle$ the inner product in the Euclidian space, $\|\cdot\|$ the euclidian norm, $\sigma$ the spherical measure on the unit sphere 
in the Euclidian space. We denote it by $\mathbb{S}^{d-1}$ when the Euclidian space is $\mathbb{R}^d$. $|\mathbb{S}^{d-1}|$ is its area.  We write a.e. for almost everywhere. 
$\mathbb{N}_0$ is the set of nonnegative integers and $\mathbb{N}$ are the positive integers. Quasi-analytic functions are functions which are infinitely differentiable and are characterized by the value of a function and all its derivatives at a point. A quasi-analytic class is defined via certain controls on the sup-norm of all the derivatives (Laplacians for functions defined on the sphere) as explained for example in \cite{GG}. Analytic functions are quasi-analytic.  

All random elements are defined on the same probability space with probability $\mathbb{P}$ and $\mathbb{E}$ is the expectation. The support of a function or random vector is denoted by $\mathrm{supp}$. We denote by $\mathrm{supp}(U|\boldsymbol{X}=\boldsymbol{x})$ the support of the conditional law of $U$ given $\boldsymbol{X}=\boldsymbol{x}$ when it makes sense. For a random vector $\boldsymbol{\boldsymbol{\Gamma}}$, $f_{\boldsymbol{\boldsymbol{\Gamma}}}$ is its density with respect to a measure which will be clear in the text and $d_{\boldsymbol{\boldsymbol{\Gamma}}}$ is its dimension. We use the notation $f_{\boldsymbol{\boldsymbol{\Gamma}}|\boldsymbol{X}=\boldsymbol{x}}$ for a conditional density and $\mathbb{E}[U|\boldsymbol{X}=\boldsymbol{x}]$ for the conditional expectation function evaluated at $\boldsymbol{x}\in\mathrm{supp}(\boldsymbol{X})$. Below we usually write for all $\boldsymbol{x}\in\mathrm{supp}(\boldsymbol{X})$ as if $\boldsymbol{X}$ were discrete. If $\boldsymbol{X}$ 
is continuous it should often be replaced by a.e.. If $\boldsymbol{X}$ has both a discrete and a continuous component then the ``for all" statement should hold for $\boldsymbol{x}$ in the part of the support which is discrete. Equalities between random variables are understood almost surely. Random vectors appearing in models and which realisations are not in the observed data are called unobservable.

\subsection{Baseline setup}
In this paper, the researcher is interested in features of the law of a variable $Y$ given $\boldsymbol{X}=\boldsymbol{x}$, where $\boldsymbol{x}\in\text{supp}(\boldsymbol{X})$. She has a selected sample of observations of $Y$, observations of a vector of which $\boldsymbol{X}$ is a subvector,  for the selected and unselected samples, and $R$ is a binary variable equal to 1 when $Y$ is observed and  else is 0.  In this paper the selection is often interpreted as a response and nonselection as nonresponse. 

The law of $Y$ given $\boldsymbol{X}=\boldsymbol{x}$ is (nonparametrically) identified if, for a large class $\Phi$ of measurable functions $\phi$, $\mathbb{E}[\phi(Y)|\boldsymbol{X}=\boldsymbol{x}]$ can be characterized from the model equation, the  restrictions on the primitives (such as conditional independence when using an instrumental variables strategy), and the distribution of the observed data. It is possible to take for $\Phi$ the bounded measurable functions, the bounded and continuous functions, the set of indicator functions $1\{\cdot \le t\}$ for all $t\in\mathbb{R}$, the set of functions $\cos(t\ \cdot)$ and $\sin(t\ \cdot)$ for all $t\in\mathbb{R}$ (or certain countable subsets if $Y$ is bounded). 
It is possible to add the assumption that $\Phi$ only contains functions which are nonnegative (a.e. if $Y$ is continuous). For example, one can work with the functions $\cos(t\ \cdot)+1$, and $\sin(t\ \cdot)+1$ for all $t\in\mathbb{R}$. We call such class $\Phi$ an identifying class. 
 
One can deduce, from the law of $Y$ given $\boldsymbol{X}=\boldsymbol{x}$, the law of a variable $Y$ given $\boldsymbol{X}=\boldsymbol{x}$, where $\boldsymbol{x}\in\text{supp}(\boldsymbol{X})$, and $R=0$. This is the law of the outcome for the nonrespondants. It is the useful one for imputation. For all $\boldsymbol{x}\in\mathrm{supp}(\boldsymbol{X})$,  we have
\begin{equation}\label{e6b}
\mathbb{E}[\phi(Y)|\boldsymbol{X}=\boldsymbol{x},R=0]=\frac{\mathbb{E}[\phi(Y)|\boldsymbol{X}=\boldsymbol{x}]-\mathbb{E}[\phi(Y)R|\boldsymbol{X}=\boldsymbol{x}]}{\mathbb{P}(R=0|\boldsymbol{X}=\boldsymbol{x})}.
\end{equation}

This paper presents identification results for a more fundamental object which is the joint distribution of the outcome and unobservable in the selection equation given $\boldsymbol{X}=\boldsymbol{x}$, where $\boldsymbol{x}\in\text{supp}(\boldsymbol{X})$. One can clearly deduce from it by marginalization the distribution of $Y$ given $\boldsymbol{X}=\boldsymbol{x}$, where $\boldsymbol{x}\in\text{supp}(\boldsymbol{X})$. 

\subsection{NMAR missing data}
Let $\boldsymbol{W}$ be a vector, of which $\boldsymbol{X}$ is a subvector, which is observed for the selected and unselected samples.  
Inference on the conditional law of $Y$ given $\boldsymbol{X}$ is possible if $Y$ and $R$ are independent given $\boldsymbol{W}$, namely if, for all bounded continuous function $\phi$,
\begin{equation}\label{e1}\mathbb{E}[\phi(Y)R|\boldsymbol{W}]=\mathbb{E}[\phi(Y)|\boldsymbol{W}]\mathbb{E}[R|\boldsymbol{W}]
\end{equation}
in which case
\begin{equation}\label{e1b}\mathbb{E}[\phi(Y)|\boldsymbol{W}]=\mathbb{E}[\phi(Y)|\boldsymbol{W},R=1]
\end{equation}
and we conclude by the law of iterated expectations. 
Condition \eqref{e1} is called Missing at Random (MAR, see \cite{LR}). When it holds without the conditioning on $\boldsymbol{W}$, it is called Missing Completely at Random (MCAR). In econometrics $\boldsymbol{W}$ such that \eqref{e1b} holds is called a control variable. 

We consider cases where the researcher does not know that a specific vector $\boldsymbol{W}$ is such that \eqref{e1} holds. Then $R$ is partly based on $Y$, even conditionally.  This situation is called Not Missing at Random (NMAR, see \cite{LR})\footnote{The terminology nonignorable is also used but is defined for parametric models and requires parameter spaces to be rectangles. This is why we do not use this terminology in this paper.}. In the language of econometrics, this is called endogenous selection.

To handle NMAR missing data it is usual to rely on a joint model for the determination of $Y$ and $R$. This paper considers so-called triangular systems where a model for $R$, called a selection equation, is specified and does not involve $Y$ but the dependence occurs via dependent latent (unobserved) variables. The identification arguments below rely on a vector $\boldsymbol{Z}$ of so-called instrumental variables which is observed for the selected and unselected samples.  It plays a completely different role as $\boldsymbol{W}$ and $\boldsymbol{X}$. 


\section{Models with One Unobservable in the Endogenous Selection}\label{s2}
Important parametric models rely on $Y=\boldsymbol{X}^{\top}\boldsymbol{\beta}+\sigma E_Y$ as a model equation for the variable of interest, $\boldsymbol{\beta}$ and $\sigma$ are unknown parameters, $\boldsymbol{X}$ and $E_Y$ are independent, and $E_Y$ is a standard normal random variable.
In the Tobit model, $R=1\{Y>y_L\}$ for a given threshold $y_L$. In the Heckman selection model (see \cite{H}) 
\begin{align}
&R=1\{\boldsymbol{Z}^{\top}\boldsymbol{\boldsymbol{\gamma}}- E_R>0\},\label{eH2}\\
&(E_Y,E_R)\text{ and }(\boldsymbol{X}^{\top},\boldsymbol{Z}^{\top})\text{ are independent},\notag\\
&(E_Y,E_R)^{\top}\text{ is a mean zero gaussian vector with covariance matrix 
}\left(
\begin{array}{ccc}
  1 & \rho  \\
 \rho & 1  
\end{array}
\right).\notag
\end{align}
\eqref{eH2} is the selection equation. 
The law of $Y$ given $\boldsymbol{X}$ and $\boldsymbol{Z}$, hence of $Y$ given $\boldsymbol{X}$ is identified and the model parameters can be estimated by maximum likelihood. Some functionals of the conditional law of $Y$ given $\boldsymbol{X}$ can be estimated for some semi-parametric extensions. For example, the conditional mean function can be obtained by estimating a regression model with an additional regressor which is a function of $\boldsymbol{Z}^{\top}\gamma$. This leads to the interpretation that the endogeneity can be understood as a missing regressor problem. 

A more general model is
\begin{align}
&R=1\{\pi(\boldsymbol{Z})>H\},\label{e2}\\
&\boldsymbol{Z}\text{ is independent of }(H,Y)\text{ given }\boldsymbol{X}\label{e3},\\
&\text{For all}\ \boldsymbol{x}\in\mathrm{supp}(\boldsymbol{X}),\ \text{the law of }H\text{ given } \boldsymbol{X}=\boldsymbol{x}\text{ is uniform on }(0,1),\label{e4}\\
&\text{For all}\ \boldsymbol{x}\in\mathrm{supp}(\boldsymbol{X}),\ \mathrm{supp}\left(\pi(\boldsymbol{Z})|\boldsymbol{X}=\boldsymbol{x}\right)=[0,1].\label{e5} 
\end{align}
Equation \eqref{e2} is the selection equation. This model is quite general and clearly
$\pi(\boldsymbol{Z})=\mathbb{E}[R|\boldsymbol{X},\boldsymbol{Z}]=\mathbb{E}[R|\boldsymbol{Z}]$. 
\eqref{e2} is as general as a selection equation that would be defined as $R=1\{g(\boldsymbol{Z})>E_R\}$, where $g$ and the law of $E_R$ are unknown. 
Indeed, one would obtain \eqref{e2} from it by applying the nondecreasing CDF of $E_R$ on both sides of the inequality 
$g(\boldsymbol{Z})>E_R$.
If we replace \eqref{e3} by $H$ and $Y$ are independent given $\boldsymbol{X},\boldsymbol{Z}$, assumption MAR holds by taking $\boldsymbol{W}$ a vector which components are those of $\boldsymbol{X}$ and $\boldsymbol{Z}$.
Condition \eqref{e3}  allows for dependence between $H$ and $Y$ and $R$ to be partly based on $Y$, even conditionally. The vector $\boldsymbol{Z}$ thus plays a very different role from $\boldsymbol{W}$ in the MAR assumption.  By \eqref{e3}, $\boldsymbol{Z}$ has a direct effect on $R$ via $\pi(\boldsymbol{Z})$ which is non trivial but it does not have an effect on $Y$ given $\boldsymbol{X}$. This type of properties for  $\boldsymbol{Z}$ is what makes it a vector of instrumental variables. It provides an alternative identification strategy. 
\eqref{e5} can be replaced by 
\begin{equation}\label{e5b}
\left.
\begin{array}{ll}
\text{For all}\ \boldsymbol{x}\in\mathrm{supp}(\boldsymbol{X}),\\
\ \ \mathrm{supp}\left(\pi(\boldsymbol{Z})|\boldsymbol{X}=\boldsymbol{x}\right)\text{ contains a nonempty open set and,}  \\
\ \ \text{for all }\phi\in\Phi_{\boldsymbol{x}},\ \mathbb{E}[\phi(Y)1\{F(u)>H\}|\boldsymbol{X}=\boldsymbol{x}]\in\mathcal{C}_{\boldsymbol{x}}
\end{array}
\right\},
\end{equation}
for a well chosen CDF $F$ such that $F(x)=\int_{-\infty}^xf(t)dt$, where $f$ is integrable and nonnegative a.e., and  identifying classes $\Phi_{\boldsymbol{x}}$ and $\mathcal{C}_{\boldsymbol{x}}$ a quasi-analytic classes of functions (see \cite{GG}). 

Note that, by \eqref{e0} below, for \eqref{e5b} to hold it is necessary that $f$ is positive a.e. 
Also, 
for all $\boldsymbol{x}\in\mathrm{supp}(\boldsymbol{X})$, the law of $(Y,H)$ conditional on $\boldsymbol{X}=\boldsymbol{x}$ is identified if
\begin{equation}\label{rootident}
\left.
\begin{array}{l}
\text{For all }\boldsymbol{x}\in\mathrm{supp}(\boldsymbol{X}),\text{ there exists an identifying class }\Phi_{\boldsymbol{x}}\\
\text{and functions }F\text{ and }f\text{ as above such, that for all }\phi\in\Phi_{\boldsymbol{x}},\\ 
\mathbb{E}[\phi(Y)|\boldsymbol{X}=\boldsymbol{x},H=F(\cdot)]f(\cdot)\text{ is identified}
\end{array}
\right\}
\end{equation}
We conclude this section by the following result. 
\begin{theorem}\label{t0}
If \eqref{e2}-\eqref{e4} and either \eqref{e5} or \eqref{e5b} hold, then one has \eqref{rootident}. Moreover, 
for all $\boldsymbol{x}\in\mathrm{supp}(\boldsymbol{X})$,  
\begin{align}
\mathbb{E}[\phi(Y)|\boldsymbol{X}=\boldsymbol{x}]&=\int_{\mathbb{R}}\partial_u\mathbb{E}[\phi(Y)R|\boldsymbol{X}=\boldsymbol{x},\pi(\boldsymbol{Z})=F(u)]du\label{e6}\\
&=\mathbb{E}[\phi(Y)R|\boldsymbol{X}=\boldsymbol{x},\pi(\boldsymbol{Z})=1].\label{e6a}
\end{align}
\end{theorem}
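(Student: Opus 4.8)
The plan is to reduce both displayed identities and \eqref{rootident} to one one-dimensional computation, performed after substituting the selection equation into the conditional moment. Fix $\boldsymbol{x}\in\mathrm{supp}(\boldsymbol{X})$ and $\phi$ in an identifying class (bounded, or bounded and nonnegative), and set $g_{\boldsymbol{x}}(h):=\mathbb{E}[\phi(Y)\,|\,\boldsymbol{X}=\boldsymbol{x},H=h]$ (defined for a.e.\ $h\in(0,1)$) and $G_{\boldsymbol{x}}(p):=\int_0^p g_{\boldsymbol{x}}(h)\,dh$. For $p\in\mathrm{supp}(\pi(\boldsymbol{Z})\,|\,\boldsymbol{X}=\boldsymbol{x})$, I would replace $R$ by $1\{p>H\}$ on the event $\{\pi(\boldsymbol{Z})=p\}$ via \eqref{e2}, then use \eqref{e3} (which makes the conditional law of $(H,Y)$ given $\boldsymbol{X}=\boldsymbol{x},\pi(\boldsymbol{Z})=p$ equal to the one given $\boldsymbol{X}=\boldsymbol{x}$, since $\pi(\boldsymbol{Z})$ is $\sigma(\boldsymbol{Z})$-measurable), and finally \eqref{e4} with the tower property:
\begin{align*}
\mathbb{E}[\phi(Y)R\,|\,\boldsymbol{X}=\boldsymbol{x},\pi(\boldsymbol{Z})=p]
&=\mathbb{E}[\phi(Y)\,1\{p>H\}\,|\,\boldsymbol{X}=\boldsymbol{x},\pi(\boldsymbol{Z})=p]\\
&=\mathbb{E}[\phi(Y)\,1\{p>H\}\,|\,\boldsymbol{X}=\boldsymbol{x}]=\int_0^p g_{\boldsymbol{x}}(h)\,dh=G_{\boldsymbol{x}}(p).
\end{align*}
Since $\pi(\boldsymbol{Z})=\mathbb{E}[R\,|\,\boldsymbol{Z}]$ is a function of the observed data, the left-hand side is identified for every such $p$, and this drives everything below.

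For the two formulas: evaluating at $p=1$ and using the tower property gives $G_{\boldsymbol{x}}(1)=\mathbb{E}\big[\mathbb{E}[\phi(Y)\,|\,\boldsymbol{X}=\boldsymbol{x},H]\,\big|\,\boldsymbol{X}=\boldsymbol{x}\big]=\mathbb{E}[\phi(Y)\,|\,\boldsymbol{X}=\boldsymbol{x}]$, which is \eqref{e6a} (under \eqref{e5} the value $p=1$ is in the support; under \eqref{e5b} it is read as the value at $1$ of the extension of $G_{\boldsymbol{x}}$ obtained in the next paragraph). For \eqref{e6}, $G_{\boldsymbol{x}}$ is Lipschitz with constant $\|\phi\|_\infty$ and $F$ is absolutely continuous with density $f$, so $u\mapsto G_{\boldsymbol{x}}(F(u))$ is absolutely continuous on compact intervals with $\partial_u G_{\boldsymbol{x}}(F(u))=g_{\boldsymbol{x}}(F(u))f(u)$ a.e.; applying the fundamental theorem of calculus on $[a,b]$ and letting $a\to-\infty$, $b\to+\infty$ (monotone convergence when $\phi\ge0$, dominated convergence by $\|\phi\|_\infty f$ in general, using $F(-\infty)=0$, $F(+\infty)=1$) yields
\[
\int_{\mathbb{R}}\partial_u\mathbb{E}[\phi(Y)R\,|\,\boldsymbol{X}=\boldsymbol{x},\pi(\boldsymbol{Z})=F(u)]\,du=G_{\boldsymbol{x}}(1)-G_{\boldsymbol{x}}(0)=\mathbb{E}[\phi(Y)\,|\,\boldsymbol{X}=\boldsymbol{x}].
\]

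For \eqref{rootident}: since $g_{\boldsymbol{x}}(F(u))f(u)=\partial_u G_{\boldsymbol{x}}(F(u))$, it suffices to identify the map $u\mapsto G_{\boldsymbol{x}}(F(u))=\mathbb{E}[\phi(Y)\,1\{F(u)>H\}\,|\,\boldsymbol{X}=\boldsymbol{x}]$ on all of $\mathbb{R}$, as then its a.e.\ derivative, hence $\mathbb{E}[\phi(Y)\,|\,\boldsymbol{X}=\boldsymbol{x},H=F(\cdot)]f(\cdot)$, is identified. Under \eqref{e5}, $F(u)\in(0,1)=\mathrm{supp}(\pi(\boldsymbol{Z})\,|\,\boldsymbol{X}=\boldsymbol{x})$ for every $u$, so this is immediate from the displayed computation, with $\Phi_{\boldsymbol{x}}$ any identifying class and $F$ any CDF with a.e.-positive density. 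Under \eqref{e5b}, the computation identifies the map only on $F^{-1}(\mathcal{O})$, where $\mathcal{O}$ is the nonempty open subset of the support; since $f>0$ a.e.\ (necessarily, as noted after \eqref{e5b}), $F$ is a homeomorphism of $\mathbb{R}$ onto $(0,1)$ and $F^{-1}(\mathcal{O})$ is a nonempty open set, and because the map lies in the quasi-analytic class $\mathcal{C}_{\boldsymbol{x}}$ its restriction to that open set determines it on all of $\mathbb{R}$; this is where \cite{GG} enters. In particular $G_{\boldsymbol{x}}(1)=\lim_{u\to+\infty}G_{\boldsymbol{x}}(F(u))$ is identified, which is what \eqref{e6a} means in this case.

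I expect the quasi-analytic continuation to be the main obstacle: one must check that membership of $u\mapsto\mathbb{E}[\phi(Y)\,1\{F(u)>H\}\,|\,\boldsymbol{X}=\boldsymbol{x}]$ in $\mathcal{C}_{\boldsymbol{x}}$ really forces this function to be recoverable from its values on an arbitrary nonempty open set (equivalently, that a quasi-analytic germ is pinned down by such a restriction), and that $F^{-1}(\mathcal{O})$ is nondegenerate, which is precisely why positivity of $f$ is imposed. The remaining points — substituting $R=1\{p>H\}$ under the conditioning, the chain rule for the absolutely continuous composition $G_{\boldsymbol{x}}\circ F$, and the passage to the limit in the fundamental theorem of calculus — are routine once $\Phi$ is restricted to bounded (possibly nonnegative) functions.
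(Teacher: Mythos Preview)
Your proposal is correct and follows essentially the same approach as the paper: substitute $R=1\{\pi(\boldsymbol{Z})>H\}$, use \eqref{e3} to drop the conditioning on $\pi(\boldsymbol{Z})$, differentiate in $u$ to obtain \eqref{e0}, and then integrate (using either \eqref{e5} or the quasi-analytic extension from \eqref{e5b}) to get \eqref{e6} and \eqref{e6a}. The only cosmetic difference is that the paper deduces \eqref{e6a} from \eqref{e6} via the endpoint values (noting the term at $\pi(\boldsymbol{Z})=0$ vanishes), whereas you read off $G_{\boldsymbol{x}}(1)$ directly; your version also spells out the absolute continuity and dominated-convergence justifications that the paper leaves implicit.
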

\begin{proof}
Based on \eqref{e3}, for all 
$\phi\in\Phi$,  
 $\boldsymbol{x}\in\mathrm{supp}(\boldsymbol{X})$, and $u$ such that $F(u)\in\mathrm{supp}(\pi(\boldsymbol{Z}))$, 
$$\mathbb{E}[\phi(Y)R|\boldsymbol{X}=\boldsymbol{x},\pi(\boldsymbol{Z})=F(u)]=\mathbb{E}[\phi(Y)1\{F(u)>H\}|\boldsymbol{X}=\boldsymbol{x}]$$
so 
\begin{equation}\label{e0}
\mathbb{E}[\phi(Y)|\boldsymbol{X}=\boldsymbol{x},H=F(u)]f(u)=\partial_u\mathbb{E}[\phi(Y)R|\boldsymbol{X}=\boldsymbol{x},\pi(\boldsymbol{Z})=F(u)].
\end{equation}
The conclusion follows from either \eqref{e5} or \eqref{e5b}. By integration we obtain \eqref{e6}, hence 
$$
\mathbb{E}[\phi(Y)|\boldsymbol{X}=\boldsymbol{x}]
=\mathbb{E}[\phi(Y)R|\boldsymbol{X}=\boldsymbol{x},\pi(\boldsymbol{Z})=1]-\mathbb{E}[\phi(Y)R|\boldsymbol{X}=\boldsymbol{x},\pi(\boldsymbol{Z})=0],
$$
and \eqref{e6a} follows because $\mathbb{E}[\phi(Y)R|\boldsymbol{X}=\boldsymbol{x},\pi(\boldsymbol{Z})=0]=0$. 
\end{proof}

\begin{rmrk} Similar formulas as \eqref{e6} and \eqref{e6a} are given for a binary treatment effect model in \cite{HV} for effects that depend on an average (\emph{i.e.} $\phi(y)=y$ for all $y\in\mathbb{R}$) rather than the whole law as above. 
There the integrand is called the local instrumental variable. 
\end{rmrk}

Condition \eqref{e5} is strong.  First, the support of $\boldsymbol{Z}$ should be infinite so in practice we think that at least a variable in $\boldsymbol{Z}$ is continuous. Second, the variation of $\boldsymbol{Z}$ should be large enough to move the selection probability $\pi(\boldsymbol{Z})$ from 0 to 1. 
This is a "large support" assumption. Using \eqref{e6a} for identification is called ``identification at infinity". Using it to construct an estimator does not make an efficient use of the data because it would make use of the subsample for which $\pi(\boldsymbol{Z}_i)$ is close to 1. In contrast, \eqref{e6} can be used to form estimators which use all the data.

The techniques in \cite{GG} allow for supports which are only countable in \eqref{e5}. It is possible to use the techniques in \cite{GG2,GG3} if we replace quasi-analytic by certain analytic classes. The advantage is to be able to use stable Fourier methods for extrapolation to build an estimator.  
\eqref{e5} were not required in the parametric Tobit and Heckman selection models. Condition \eqref{e5b} is a nonparametric middle ground between a parametric assumption made for convenience and a nonparametric one which is often too demanding for finding an instrument. Clearly, in this setup, building an estimator from \eqref{e6a} is simply impossible while building an estimator using \eqref{e6} and the available data is possible.


\section{Monotonicity}\label{s3}
In this section, we show that the above nonparametric specification is not as general as we would think. 
From a modelling perspective, it is related (equivalent, see \cite{V}) to the so-called instrument monotonicity introduced in \cite{IA}.   

For the sake of exposition, assume that $\boldsymbol{Z}$ is discrete. For $\bf{z}\in\mathrm{supp}(\boldsymbol{Z})$ and individuals 
 that we index by $i\in\mathcal{I}(\bf{z})$, 
such that $\boldsymbol{Z}_i=\bf{z}$, we have $R_i=1\{\pi(\mathbf{z})>H_i\}$.  Suppose now that we could change exogeneously (by experimental assignment) $\bf{z}$ to $\bf{z}'$ in $\mathrm{supp}(\boldsymbol{Z})$ leaving unchanged the unobserved characteristics $H_i$ for $i\in\mathcal{I}(\bf{z})$. 
The corresponding $R_i$ of those individuals are shifted monotonically. Indeed, we have either (1) $\pi(\bf{z})\le\pi(\bf{z}')$ or (2) $\pi(\bf{z})>\pi(\bf{z}')$. 
In case (1), 
$$\forall i\in\mathcal{I}(\mathbf{z}),\ 1\{\pi(\mathbf{z})>H_i\}\le 1\{\pi(\mathbf{z}')>H_i\}$$
while in case (2), 
$$\forall i\in\mathcal{I}(\mathbf{z}),\ 1\{\pi(\mathbf{z})>H_i\}\ge 1\{\pi(\mathbf{z}')>H_i\}.$$
This instrument monotonicity condition has been formalized in \cite{IA}.  

Consider a missing data problem in a survey where $d_{\boldsymbol{Z}}=1$, $\boldsymbol{Z}=Z$ is the identity of a pollster, and $R=1$ when the surveyed individual replies and else $R=0$. The identity of the pollster could be Mr A (z=0) or Mrs B (z=1). This qualifies for an instrument because, usually, the identity of the pollster can have an effect on the response but not on the value of the surveyed variable. If the missing data model is any from Section \ref{s2} and pollster B has a higher response rate than pollster A, then in the hypothetic situation where all individuals surveyed by Mr A had been surveyed by Mrs B, then those who responded to Mr A respond to Mrs B and some who did not respond to Mr A respond to Mrs B, but no one who responded to Mr A would not respond to Mrs B. This last type of individuals corresponds to the so-called defiers in the terminology of \cite{IA}: those for which $R_i=1$ when $z=1$ and  $R_i=0$ when $z=0$. There, instrument monotonicity means that there are no defiers. 
\begin{rmrk}
The terminology also calls compliers those who did not respond to Mr A but who would respond to Mrs B, never takers those who would respond to neither, and always takers those who would respond to both. 
\end{rmrk}
The absence of defiers can be unrealistic. For example, some surveyed individuals can answer a pollster because they feel confident with them. They can share the same traits which the statistician do not observe. For example, in the conversation they could realize they share the same interest or went to the same school.

\section{A Random Coefficients Model for the Selection Equation}\label{RC}
\cite{V} showed that monotonicity is equivalent to modelling the selection equation as an additively separable latent index model with a single unobservable. In \eqref{e2} the index is $\pi(\boldsymbol{Z})-H$ and $H$ is the unobservable. A nonadditively separable model takes the form  $\pi(\boldsymbol{Z},H)$. \cite{HV} proposes for a nonadditively separable index with multiple unobservables a random coefficients binary choice model. They call it a benchmark. A random coefficients latent index model takes the form $A+\boldsymbol{B}^{\top}\boldsymbol{Z}$, where $(A,\boldsymbol{B}^{\top})$ and $\boldsymbol{Z}$ are independent. This leads to
\begin{equation}\label{modRC}
R=1\{A+\boldsymbol{B}^{\top}\mathbf{Z}>0\}.
\end{equation}
 The multiple unobservables are the coefficients $(A,\boldsymbol{B}^{\top})$ and play the role of $H$ above. The model is nonadditively separable due to the products. The random intercept $A$ absorbs the usual mean zero error and deterministic intercept. The random slopes $\boldsymbol{B}$ can be interpreted as the taste for the characteristic $z$. The components of $(A,\boldsymbol{B}^{\top})$ can be dependent. 

To gain intuition, assume that $\boldsymbol{Z}$ is discrete. For $\bf{z}\in\mathrm{supp}(\boldsymbol{Z})$ and individuals $i\in\mathcal{I}(\bf{z})$ such that $\boldsymbol{Z}_i=\bf{z}$, we have 
$$R_i=1\{A_i+\boldsymbol{B}_i^{\top}\mathbf{z}>0\}.$$ Suppose that the first component of $\boldsymbol{B}$ takes  positive and negative values with positive probability, that we change exogeneously $\bf{z}$ to $\bf{z}'$ in $\mathrm{supp}(\boldsymbol{Z})$ by only changing the first component, and that we leave unchanged the unobserved characteristics $(A_i,\boldsymbol{B}^{\top}_i)$ for $i\in\mathcal{I}(\bf{z})$. This model allows for populations of 
compliers (those for which the first component of $\boldsymbol{B}_i$ is positive) and defiers (those for which the first component of $\boldsymbol{B}_i$ is negative).  

A parametric model for a selection equation specifies a parametric law for $(A,\boldsymbol{B}^{\top})$. A parametric model for a selection model specifies a joint law of $(A,\boldsymbol{B}^{\top},Y)$ given $\boldsymbol{X},\boldsymbol{Z}$. The model parameters can be estimated by maximum likelihood. The components of  $(A,\boldsymbol{B}^{\top},Y)$ given $\boldsymbol{X},\boldsymbol{Z}$ could be modelled as dependent. $(A,\boldsymbol{B}^{\top})$ is a vector of latent variables and the likelihood involves integrals over $\mathbb{R}^{d_{\boldsymbol{Z}}+1}$. As for the usual Logit or Probit models, a scale normalization is usually introduced for identification. Indeed $1\{A+\boldsymbol{B}^{\top} \boldsymbol{Z}>0\}=1\{c(A+\boldsymbol{B}^{\top} \boldsymbol{Z})>0\}$ for all $c>0$. A nonparametric model allows the law of 
$(A,\boldsymbol{B}^{\top},Y,\boldsymbol{Z})$ given $\boldsymbol{X}=\boldsymbol{x}$  to be a nonparametric class. Parametric and nonparametric models are particularly interesting when they allow for discrete mixtures to allow for different groups of individuals such as the compliers, defiers, always takers and never takers. But estimating a parametric model with latent variables which are drawn from multivariate mixtures can be a difficult exercise. In contrast nonparametric estimators can be easy to compute. 

The approach in this paper to relax monotonicity is based on \cite{GH}. 

\subsection{Scaling to Handle Genuine Non Instrument Monotonicity}
In this section, we rely on the approach used in  the first version of \cite{GH} in the context of treatment effects models. This is based on the normalisation in \cite{GK,GlP}. Let $d-1$ be the dimension  of the vector of instrumental variables. For scale normalization, we define
\begin{align*}
\boldsymbol{\boldsymbol{\Gamma}}^\top=\frac{\left(A,\boldsymbol{B}^\top\right)}{\left\|\left(A,\boldsymbol{B}^\top\right)\right\|}1\left\{\left(A,\boldsymbol{B}^\top\right)\ne0\right\},\quad\boldsymbol{S}^\top=\frac{\left(1,\boldsymbol{Z}^\top\right)}{\left\|\left(1,\boldsymbol{Z}^\top\right)\right\|}
\end{align*}
so that 
$$R=1\{\boldsymbol{\boldsymbol{\Gamma}}^\top\boldsymbol{S}>0\}.$$
We introduce some additional notations. When $f$ 
is an integrable function on $\mathbb{S}^{d-1}$, 
we denote by $\check{f}$ the function $\boldsymbol{\theta}\in \mathbb{S}^{d-1}\mapsto f(-\boldsymbol{\theta})$, by $f^-$ the function $(f-\check{f})/2$.
If $f\in L^2(\mathbb{S}^{d-1})$ (\emph{i.e.} is square integrable) is nonnegative a.e. and $f\check{f}=0$ a.e., then 
\begin{equation}\label{eInv0}
f=2f^-1\left\{f^->0\right\}\ \text{a.e.}
\end{equation}
The hemispherical transform (see \cite{R}) of an integrable function $f$ on $\mathbb{S}^{d-1}$ is defined as
$$\forall \boldsymbol{s}\in\mathbb{S}^{d-1},\ \mathcal{H}[f](\boldsymbol{s})=\int_{\boldsymbol{\theta}\in\mathbb{S}:\ \langle \boldsymbol{s},\boldsymbol{\theta} \rangle\ge 0}f(\boldsymbol{\theta})d\sigma(\boldsymbol{\theta}).$$ 
This is a circular convolution in dimension $d=2$
$$\forall \varphi\in[0,2\pi),\ \mathcal{H}[f](\varphi)=\int_{\varphi\in[0,2\pi):\ \cos(\varphi-\theta)\ge 0}f(\theta)d\theta.$$ 
We now recall a few useful properties of the hemispherical transform (see \cite{GK} for more details). If $f\in L^2(\mathbb{S}^{d-1})$, then $\mathcal{H}[f]$ is a continuous function and $\mathcal{H}[f^-]=\mathcal{H}[f]^-$. 
The null space of $\mathcal{H}$ consists of the integrable functions which are even (by a density argument) and integrate to 0 on $\mathbb{S}^{d-1}$. As a result 
\begin{equation}\label{eexplic}
\mathcal{H}[f]=\int_{\boldsymbol{\theta}\in\mathbb{S}^{d-1}}f(\boldsymbol{\theta})d\sigma(\boldsymbol{\theta})/2+\mathcal{H}[f^-].
\end{equation} 
$\mathcal{H}$ is injective when acting on the cone of nonnegative almost everywhere functions in $L^2(\mathbb{S})$  
such that $f\check{f}=0$ a.e. (see \cite{GK,GlP}). This means that $f$ cannot be nonzero at two antipodal points of $\mathbb{S}$. We denote by $\mathcal{H}^{-1}$ the unbounded inverse operator. 
We now present a formula for the inverse. If $g=\mathcal{H}(f)$, then  
\begin{align}
f^-(\boldsymbol{\gamma})&=
\sum_{p\in\mathbb{N}_0}
\frac{1}{\lambda_{2p+1,d}}
\int_{\mathbb{S}^{d-1}} q_{2p+1,d}(\boldsymbol{\gamma}^{\top}\boldsymbol{s})g(\boldsymbol{s})d\sigma(\boldsymbol{s})\label{eInv}\\
&=\sum_{p\in\mathbb{N}_0}
\frac{1}{\lambda_{2p+1,d}}
\int_{\mathbb{S}^{d-1}} q_{2p+1,d}(\boldsymbol{\gamma}^{\top}\boldsymbol{s})g^-(\boldsymbol{s})d\sigma(\boldsymbol{s}),
\label{eInv2}
\end{align}
where
\begin{align*}
&\lambda_{1,d}=\frac{|\mathbb{S}^{d-2}|} {d-1},\ \forall p \in \mathbb N,\
\lambda_{2p+1,d}=\frac{(-1)^{p}|\mathbb{S}^{d-2}|1\cdot3\cdots(2p-1)}
{(d-1)(d+1)\cdots(d+2p-1)},
\end{align*}
$$L(k,d)=\frac{(2k+d-2)(k+d-2)!}{k!(d-2)!(k+d-2)},\ q_{k,d}(t) :=
\frac{L(k,d)C_k^{(d-2)/2}(t)}{|\mathbb{S}^{d-1}|C_k^{(d-2)/2}(1)},$$
for all $\mu>-1/2$ and $k\in\mathbb{N}_0$, $C_k^{\mu}(t)$ are orthogonal polynomials on $[-1,1]$ for the weight $(1-t^2)^{\mu-1/2}dt$. The Gegenbauer polynomials $C_k^{\mu}(t)$ can be obtained by the recursion $C_0^{\mu}(t)=1$, $C_1^{\mu}(t)=2\mu t$
for $\mu\ne0$ while $C_1^{0}(t)=2t$, and
$$
(k+2)C_{k+2}^{\mu}(t)=2(\mu+k+1)tC_{k+1}^{\mu}(t)-(2\mu+k)C_k^{\mu}(t).
$$
Indeed, for all $p\in\mathbb{N}_0$,   $\boldsymbol{s}\to q_{2p+1,d}(\boldsymbol{\gamma}^{\top}\boldsymbol{s})$ is odd. 
\begin{rmrk}
Other inversion formulas when $\mathcal{H}$ is restricted to odd functions or measures rather than the above cone are given in \cite{R}. 
\end{rmrk}
We consider the following model restrictions, for all $\boldsymbol{x}\in\mathrm{supp}(\boldsymbol{X})$, 
\begin{align}
&\mathbb{P}\left(\boldsymbol{\boldsymbol{\Gamma}}=0|\boldsymbol{X}=\boldsymbol{x}\right)=0,\label{e7}\\
&\boldsymbol{S}\text{ is independent of }\left(\boldsymbol{\boldsymbol{\Gamma}}^\top,Y\right)\text{ given }\boldsymbol{X},\label{e9}\\
&\text{The conditional law of }\boldsymbol{\boldsymbol{\Gamma}}\text{ given }\boldsymbol{X}=\boldsymbol{x}\text{ is absolutely continuous }\notag\\
&\text{with respect to }\sigma\text{ and the density belongs to }
L^2(\mathbb{S}^{d-1}),\label{e12}\\
&\text{For a.e. }\boldsymbol{\boldsymbol{\gamma}}\in\mathbb{S}^{d-1},\ f_{\boldsymbol{\boldsymbol{\Gamma}}|\boldsymbol{X}=\boldsymbol{x}}(\boldsymbol{\boldsymbol{\gamma}})\check{f}_{\boldsymbol{\boldsymbol{\Gamma}}|\boldsymbol{X}=\boldsymbol{x}}(\boldsymbol{\boldsymbol{\gamma}})=0,\label{e11}\\
&\mathrm{supp}\left(\boldsymbol{S}|\boldsymbol{X}=\boldsymbol{x}\right)=\{\boldsymbol{s}\in\mathbb{S}^{d-1}:\ \boldsymbol{s}_1\ge0\}.\label{e10}
\end{align}
Let $g_{\phi,\boldsymbol{x}}
=\mathcal{H}\left[\mathbb{E}\left[\phi(Y)|\boldsymbol{X}=\boldsymbol{x},\boldsymbol{\boldsymbol{\Gamma}}=\cdot\right]f_{\boldsymbol{\Gamma}|\boldsymbol{X}=\boldsymbol{x}}(\cdot)\right]$, where, by a slight abuse of notations, the root $\mathbb{E}\left[\phi(Y)|\boldsymbol{X}=\boldsymbol{x},\boldsymbol{\boldsymbol{\Gamma}}=\cdot\right]f_{\boldsymbol{\Gamma}|\boldsymbol{X}=\boldsymbol{x}}(\cdot)$ is zero outside $\text{supp}(\boldsymbol{\Gamma}|\boldsymbol{X}=\boldsymbol{x})$. By the above properties of $\mathcal{H}$, we have 
$g_{\phi,\boldsymbol{x}}^-
=\mathcal{H}\left[\left(\mathbb{E}\left[\phi(Y)|\boldsymbol{X}=\boldsymbol{x},\boldsymbol{\boldsymbol{\Gamma}}=\cdot\right]f_{\boldsymbol{\Gamma}|\boldsymbol{X}=\boldsymbol{x}}(\cdot)\right)^-\right]
$. 

\eqref{e10} can be replaced by, for identifying classes $\Phi_{\boldsymbol{x}}$ of functions which are nonnegative a.e. and  a quasi-analytic classes $\mathcal{C}_{\boldsymbol{x}}$, 
\begin{equation}\label{e10b}
\left.
\begin{array}{ll}
\text{For all}\ \boldsymbol{x}\in\mathrm{supp}(\boldsymbol{X}),\\
\ \ \mathrm{supp}\left(\boldsymbol{S}|\boldsymbol{X}=\boldsymbol{x}\right)\text{ has a nonempty interior}\\
\ \ \text{For all }\phi\in\Phi_{\boldsymbol{x}},\
g_{\phi,\boldsymbol{x}}\in\mathcal{C}_{\boldsymbol{x}}.
\end{array}
\right\}
\end{equation}

This specification has the advantage that we do not assume that the researcher knows that one coefficient has a sign. Indeed it is easy to see that \eqref{e11} contains such an assumption as a subcase. It allows for non instrument monotonicity for all instruments.  Condition \eqref{e10} is demanding because it means that $\mathrm{supp}(\boldsymbol{Z}|\boldsymbol{X}=\boldsymbol{x})$ is the whole space for all $\boldsymbol{x}\in\mathrm{supp}(\boldsymbol{X})$. Hence we provide \eqref{e10b} which allows for an intermediate between nonparametric assumptions which are too demanding on the instruments and a parametric model. For further reference, we use the notation $H^+=\{\boldsymbol{s}\in\mathbb{S}^{d-1}:\ \boldsymbol{s}_1\ge0\}$. 
\begin{rmrk}
Proceeding like in \cite{GG,GG2} allows an index of the form $\pi(\boldsymbol{Z},\boldsymbol{H})$ where $\boldsymbol{Z}$ are instrumental variables and $\boldsymbol{H}$ is multidimensional of arbitrary dimension but has a sparse random series expansion on some classes of functions and the conditional law of $\boldsymbol{Z}$, given $\boldsymbol{X}=\boldsymbol{x}$, for all $\boldsymbol{x}\in\mathrm{supp}(\boldsymbol{X})$, can have a support which is a subspace of the whole space. This means that a nonparametric random coefficients linear index already captures a large class of nonadditively separable models with multiple unobservables. 
\end{rmrk}
Using successively \eqref{e9}, the law of iterated expectations 
and \eqref{e12}, and \eqref{eexplic}, 
we obtain that, for all $(\boldsymbol{s}^{\top},\boldsymbol{x}^{\top})\in\mathrm{supp}(\boldsymbol{S}^{\top},\boldsymbol{X}^{\top})$, 
\begin{align}
\mathbb{E}\left[\phi(Y)R|\boldsymbol{X}=\boldsymbol{x},\boldsymbol{S}=\boldsymbol{s}\right]&=
\mathbb{E}\left[\phi(Y)1\{\boldsymbol{\Gamma}^{\top}\boldsymbol{s}>0\}|\boldsymbol{X}=\boldsymbol{x}\right]
\label{rhs}\\
&=g_{\phi,\boldsymbol{x}}(\boldsymbol{s}),\label{e140}\\
&=\frac12\mathbb{E}\left[\phi(Y)|\boldsymbol{X}=\boldsymbol{x}\right]+g_{\phi,\boldsymbol{x}}^-(\boldsymbol{s}).\label{equ}
\end{align}
We obtain the following theorem which states that $\mathbb{E}\left[\phi(Y)|\boldsymbol{X}=\boldsymbol{x}\right]$ can be identified at infinity under \eqref{e10}. 
\begin{theorem}
\label{p1} Assume \eqref{e7}-\eqref{e10}. For all $\tilde{\boldsymbol{s}}$ on the boundary of $H^+$ and $\boldsymbol{x}\in\mathrm{supp}(\boldsymbol{X})$, 
\begin{equation*}
\mathbb{E}\left[\phi(Y)|\boldsymbol{X}=\boldsymbol{x}\right]= 
\lim_{\boldsymbol{s}\to \tilde{\boldsymbol{s}},\ \boldsymbol{s}\in H^+}\mathbb{E}\left[\phi(Y)R|\boldsymbol{X}=\boldsymbol{x},\boldsymbol{S}=\boldsymbol{s}\right]+\lim_{\boldsymbol{s}\to -\tilde{\boldsymbol{s}},\ \boldsymbol{s}\in H^+}\mathbb{E}\left[\phi(Y)R|\boldsymbol{X}=\boldsymbol{x},\boldsymbol{S}=\boldsymbol{s}\right].
\end{equation*}
\end{theorem}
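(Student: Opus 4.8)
The plan is to read the conclusion off the decomposition \eqref{equ}, which is established just before the statement and which, by \eqref{e10}, is valid for every $\boldsymbol{s}$ in $\mathrm{supp}(\boldsymbol{S}\mid\boldsymbol{X}=\boldsymbol{x})=H^+$; there it reads
\[
\mathbb{E}\left[\phi(Y)R\mid\boldsymbol{X}=\boldsymbol{x},\boldsymbol{S}=\boldsymbol{s}\right]=\frac12\,\mathbb{E}\left[\phi(Y)\mid\boldsymbol{X}=\boldsymbol{x}\right]+g_{\phi,\boldsymbol{x}}^-(\boldsymbol{s}).
\]
The first summand does not depend on $\boldsymbol{s}$, so the whole argument reduces to two facts about $g_{\phi,\boldsymbol{x}}^-$: that it is continuous on all of $\mathbb{S}^{d-1}$, so that the two limits appearing in the statement exist and equal the corresponding pointwise values; and that it is odd, $g_{\phi,\boldsymbol{x}}^-(-\boldsymbol{s})=-g_{\phi,\boldsymbol{x}}^-(\boldsymbol{s})$.

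For continuity, I would first note that $\phi$ may be taken bounded, since the identifying classes $\Phi_{\boldsymbol{x}}$ contain bounded functions; hence the ``root'' $\boldsymbol{\gamma}\mapsto\mathbb{E}[\phi(Y)\mid\boldsymbol{X}=\boldsymbol{x},\boldsymbol{\Gamma}=\boldsymbol{\gamma}]f_{\boldsymbol{\Gamma}\mid\boldsymbol{X}=\boldsymbol{x}}(\boldsymbol{\gamma})$ (extended by $0$ off $\mathrm{supp}(\boldsymbol{\Gamma}\mid\boldsymbol{X}=\boldsymbol{x})$) is dominated in absolute value by a constant times $f_{\boldsymbol{\Gamma}\mid\boldsymbol{X}=\boldsymbol{x}}$, which lies in $L^2(\mathbb{S}^{d-1})$ by \eqref{e12}; so the root is in $L^2(\mathbb{S}^{d-1})$. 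By the recalled mapping properties of the hemispherical transform, $\mathcal{H}$ sends $L^2(\mathbb{S}^{d-1})$ into the continuous functions and satisfies $\mathcal{H}[h]^-=\mathcal{H}[h^-]$; applying this to the root gives that $g_{\phi,\boldsymbol{x}}^-=\mathcal{H}[\mathrm{root}^-]$ is continuous on the entire sphere, and in particular up to and including the boundary of $H^+$, not merely on the interior of $H^+$. Oddness is immediate from the definition $h^-=(h-\check h)/2$, since $(h^-)\check{\ }=-h^-$.

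Now fix $\tilde{\boldsymbol{s}}$ on the boundary of $H^+$, that is, with first coordinate equal to $0$; then $-\tilde{\boldsymbol{s}}$ also has first coordinate $0$, so both $\tilde{\boldsymbol{s}}$ and $-\tilde{\boldsymbol{s}}$ belong to $H^+=\mathrm{supp}(\boldsymbol{S}\mid\boldsymbol{X}=\boldsymbol{x})$, and the displayed identity (read through its continuous version) applies as $\boldsymbol{s}$ approaches either of them from within $H^+$. Using continuity of $g_{\phi,\boldsymbol{x}}^-$,
\[
\lim_{\boldsymbol{s}\to\tilde{\boldsymbol{s}},\ \boldsymbol{s}\in H^+}\mathbb{E}\left[\phi(Y)R\mid\boldsymbol{X}=\boldsymbol{x},\boldsymbol{S}=\boldsymbol{s}\right]=\frac12\,\mathbb{E}\left[\phi(Y)\mid\boldsymbol{X}=\boldsymbol{x}\right]+g_{\phi,\boldsymbol{x}}^-(\tilde{\boldsymbol{s}}),
\]
\[
\lim_{\boldsymbol{s}\to-\tilde{\boldsymbol{s}},\ \boldsymbol{s}\in H^+}\mathbb{E}\left[\phi(Y)R\mid\boldsymbol{X}=\boldsymbol{x},\boldsymbol{S}=\boldsymbol{s}\right]=\frac12\,\mathbb{E}\left[\phi(Y)\mid\boldsymbol{X}=\boldsymbol{x}\right]-g_{\phi,\boldsymbol{x}}^-(\tilde{\boldsymbol{s}}),
\]
the second equality using oddness. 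Adding the two lines cancels the $g_{\phi,\boldsymbol{x}}^-$ terms and yields exactly $\mathbb{E}[\phi(Y)\mid\boldsymbol{X}=\boldsymbol{x}]$, which is the claim.

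The only genuine obstacle is the passage from the a.e.-defined conditional expectation to an honest pointwise limit: one must argue that $\boldsymbol{s}\mapsto\mathbb{E}[\phi(Y)R\mid\boldsymbol{X}=\boldsymbol{x},\boldsymbol{S}=\boldsymbol{s}]$ admits the continuous version furnished by \eqref{equ} and that this version extends continuously to the boundary points $\pm\tilde{\boldsymbol{s}}$. This is precisely what the continuity of $\mathcal{H}[\mathrm{root}]$ on all of $\mathbb{S}^{d-1}$ delivers; once that is in hand, the rest is an algebraic manipulation with the parity of $g_{\phi,\boldsymbol{x}}^-$. (Since $\pm\tilde{\boldsymbol{s}}\in H^+$, these ``limits'' are in fact attained values; the limit formulation is what one would exploit for estimation, where only interior points of $H^+$ are observed.)
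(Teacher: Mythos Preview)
Your proposal is correct and follows essentially the same route as the paper's proof: invoke the decomposition \eqref{equ}, use that $g_{\phi,\boldsymbol{x}}^-$ is continuous on $\mathbb{S}^{d-1}$ (as the hemispherical transform of an $L^2$ function) and odd, then add the two limits so that the odd parts cancel. You have simply spelled out in more detail what the paper compresses into one sentence.
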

\begin{proof} Let $\boldsymbol{x}\in\mathrm{supp}(\boldsymbol{X})$. 
The result follows from \eqref{equ} 
and the facts that $g_{\phi,\boldsymbol{x}}^-$ is odd and 
continuous as recalled at the beginning of the paragraph.  
\end{proof}
By \eqref{eInv0}, for all $\boldsymbol{x}\in\mathrm{supp}(\boldsymbol{X})$, the law of $(Y,\boldsymbol{\Gamma}^{\top})$ conditional on $\boldsymbol{X}=\boldsymbol{x}$ is identified if
\begin{equation}\label{rootident}
\left.
\begin{array}{l}
\text{For all }\boldsymbol{x}\in\mathrm{supp}(\boldsymbol{X}),\text{ there exists an identifying class }\Phi_{\boldsymbol{x}}\text{ of functions}\\
\text{which are nonnegative a.e. such that,}\\
\text{for all }\phi\in\Phi_{\boldsymbol{x}},\ 
(\mathbb{E}[\phi(Y)|\boldsymbol{X}=\boldsymbol{x},\boldsymbol{\boldsymbol{\Gamma}}=\cdot]f_{\boldsymbol{\Gamma}}(\cdot))^-\text{ is identified.}
\end{array}
\right\}
\end{equation}

%

The next theorem shows that, when $\phi$ is positive a.e., by integration, $\mathbb{E}\left[\phi(Y)|\boldsymbol{X}=\boldsymbol{x}\right]$ is nonparametrically identified with an alternative formula which does not involve taking limits. 
\begin{theorem}\label{p2} Assume \eqref{e7}-\eqref{e11} and either \eqref{e10} or \eqref{e10b}. \eqref{rootident} holds. 
Moreover, under \eqref{e10}, 
for all  $\boldsymbol{x}\in\mathrm{supp}(\boldsymbol{X})$, $\phi\in\Phi_{\boldsymbol{x}}$, $\boldsymbol{\boldsymbol{\gamma}}\in\mathbb{S}^{d-1}$, and $p\in\mathbb{N}_0$,
\begin{align}
&\int_{\mathbb{S}^{d-1}} q_{2p+1,d}(\boldsymbol{\gamma}^{\top}\boldsymbol{s})g_{\phi,\boldsymbol{x}}(\boldsymbol{s})d\sigma(\boldsymbol{s})\label{coef}\\&=2\mathbb{E}\left[\left.\frac{q_{2p+1,d}(\boldsymbol{\gamma}^{\top}\boldsymbol{S})}{f_{\boldsymbol{S}|\boldsymbol{X}=\boldsymbol{x}}(\boldsymbol{S})}\phi(Y)R\right|\boldsymbol{X}=\boldsymbol{x}\right]-\mathbb{E}\left[\phi(Y)|\boldsymbol{X}=\boldsymbol{x}\right]\int_{H^+} q_{2p+1,d}(\boldsymbol{\gamma}^{\top}\boldsymbol{s})(\boldsymbol{s})d\sigma(\boldsymbol{s}).\notag
\end{align}
\end{theorem}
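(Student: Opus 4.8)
The plan is to establish the closed-form coefficient identity \eqref{coef} first and then deduce \eqref{rootident} from it through the inversion formula for the hemispherical transform. Throughout, write $r_{\phi,\boldsymbol{x}}:=\mathbb{E}[\phi(Y)|\boldsymbol{X}=\boldsymbol{x},\boldsymbol{\Gamma}=\cdot]f_{\boldsymbol{\Gamma}|\boldsymbol{X}=\boldsymbol{x}}(\cdot)$ for the root appearing in \eqref{rootident}, set to $0$ off $\mathrm{supp}(\boldsymbol{\Gamma}|\boldsymbol{X}=\boldsymbol{x})$, so that $g_{\phi,\boldsymbol{x}}=\mathcal{H}[r_{\phi,\boldsymbol{x}}]$ and, by the properties of $\mathcal{H}$ recalled above, $g_{\phi,\boldsymbol{x}}^-=\mathcal{H}[r_{\phi,\boldsymbol{x}}^-]$. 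First I would record that $r_{\phi,\boldsymbol{x}}\in L^2(\mathbb{S}^{d-1})$ (by \eqref{e12} and the boundedness of $\phi$, which may be assumed of an identifying class), that $r_{\phi,\boldsymbol{x}}\ge0$ a.e.\ (because $\phi\ge0$ a.e.), and that $r_{\phi,\boldsymbol{x}}\check{r}_{\phi,\boldsymbol{x}}=0$ a.e.\ (by \eqref{e11}, since $r_{\phi,\boldsymbol{x}}$ is a nonnegative multiple of $f_{\boldsymbol{\Gamma}|\boldsymbol{X}=\boldsymbol{x}}$); this places $r_{\phi,\boldsymbol{x}}$ in the cone on which \eqref{eInv0} and the inversion \eqref{eInv}--\eqref{eInv2} are valid.

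For \eqref{coef}, fix $\boldsymbol{x}$, $\phi\in\Phi_{\boldsymbol{x}}$, $\boldsymbol{\gamma}\in\mathbb{S}^{d-1}$ and $p\in\mathbb{N}_0$. The key observation is that $\boldsymbol{s}\mapsto q_{2p+1,d}(\boldsymbol{\gamma}^\top\boldsymbol{s})$ is odd, $\sigma$ is invariant under $\boldsymbol{s}\mapsto-\boldsymbol{s}$, and $g_{\phi,\boldsymbol{x}}-g_{\phi,\boldsymbol{x}}^-$ is even, so the $\mathbb{S}^{d-1}$-integral of $q_{2p+1,d}(\boldsymbol{\gamma}^\top\cdot)\,g_{\phi,\boldsymbol{x}}$ equals that of $q_{2p+1,d}(\boldsymbol{\gamma}^\top\cdot)\,g_{\phi,\boldsymbol{x}}^-$, and the latter integrand being even it equals $2\int_{H^+}q_{2p+1,d}(\boldsymbol{\gamma}^\top\boldsymbol{s})g_{\phi,\boldsymbol{x}}^-(\boldsymbol{s})\,d\sigma(\boldsymbol{s})$. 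Under \eqref{e10} one has $\mathrm{supp}(\boldsymbol{S}|\boldsymbol{X}=\boldsymbol{x})=H^+$, so rearranging \eqref{equ} gives $g_{\phi,\boldsymbol{x}}^-(\boldsymbol{s})=\mathbb{E}[\phi(Y)R|\boldsymbol{X}=\boldsymbol{x},\boldsymbol{S}=\boldsymbol{s}]-\tfrac12\mathbb{E}[\phi(Y)|\boldsymbol{X}=\boldsymbol{x}]$ for $\sigma$-a.e.\ $\boldsymbol{s}\in H^+$. Substituting, the constant part produces $-\mathbb{E}[\phi(Y)|\boldsymbol{X}=\boldsymbol{x}]\int_{H^+}q_{2p+1,d}(\boldsymbol{\gamma}^\top\boldsymbol{s})\,d\sigma(\boldsymbol{s})$, while for the remaining part I would insert and cancel the factor $f_{\boldsymbol{S}|\boldsymbol{X}=\boldsymbol{x}}(\boldsymbol{s})$ and then apply the law of iterated expectations conditioning on $(\boldsymbol{X},\boldsymbol{S})$ (legitimate since $q_{2p+1,d}(\boldsymbol{\gamma}^\top\boldsymbol{S})/f_{\boldsymbol{S}|\boldsymbol{X}=\boldsymbol{x}}(\boldsymbol{S})$ is a measurable function of $\boldsymbol{S}$), which turns $2\int_{H^+}q_{2p+1,d}(\boldsymbol{\gamma}^\top\boldsymbol{s})\mathbb{E}[\phi(Y)R|\boldsymbol{X}=\boldsymbol{x},\boldsymbol{S}=\boldsymbol{s}]\,d\sigma(\boldsymbol{s})$ into $2\,\mathbb{E}\big[q_{2p+1,d}(\boldsymbol{\gamma}^\top\boldsymbol{S})f_{\boldsymbol{S}|\boldsymbol{X}=\boldsymbol{x}}(\boldsymbol{S})^{-1}\phi(Y)R\,\big|\,\boldsymbol{X}=\boldsymbol{x}\big]$. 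Collecting the two parts gives \eqref{coef}.

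For \eqref{rootident}: under \eqref{e10}, the inversion formula \eqref{eInv} reads $r_{\phi,\boldsymbol{x}}^-(\boldsymbol{\gamma})=\sum_{p\in\mathbb{N}_0}\lambda_{2p+1,d}^{-1}\int_{\mathbb{S}^{d-1}}q_{2p+1,d}(\boldsymbol{\gamma}^\top\boldsymbol{s})g_{\phi,\boldsymbol{x}}(\boldsymbol{s})\,d\sigma(\boldsymbol{s})$, and every summand is identified by \eqref{coef} once one invokes Theorem \ref{p1} for the term $\mathbb{E}[\phi(Y)|\boldsymbol{X}=\boldsymbol{x}]$ and notes that the remaining quantities are conditional moments of observed variables; hence $r_{\phi,\boldsymbol{x}}^-$ is identified, which is \eqref{rootident} (and then \eqref{eInv0} together with the discussion preceding the theorem recovers $r_{\phi,\boldsymbol{x}}$ and the law of $(Y,\boldsymbol{\Gamma}^\top)$ given $\boldsymbol{X}=\boldsymbol{x}$ as $\phi$ runs over $\Phi_{\boldsymbol{x}}$). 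Under \eqref{e10b}, \eqref{e140} shows that $g_{\phi,\boldsymbol{x}}$ coincides with the observable $\mathbb{E}[\phi(Y)R|\boldsymbol{X}=\boldsymbol{x},\boldsymbol{S}=\cdot]$ on the nonempty interior of $\mathrm{supp}(\boldsymbol{S}|\boldsymbol{X}=\boldsymbol{x})$; since $g_{\phi,\boldsymbol{x}}\in\mathcal{C}_{\boldsymbol{x}}$ and $\mathcal{C}_{\boldsymbol{x}}$ is quasi-analytic, $g_{\phi,\boldsymbol{x}}$ is pinned down on all of $\mathbb{S}^{d-1}$ by its restriction to that open set (the argument of \cite{GG}), hence so is $g_{\phi,\boldsymbol{x}}^-$, and \eqref{eInv2} then identifies $r_{\phi,\boldsymbol{x}}^-$.

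The odd/even bookkeeping, the $L^2$ and iterated-expectation/Fubini justifications, and the cone-membership checks needed for \eqref{eInv0} are routine. The hard part will be the quasi-analytic continuation of $g_{\phi,\boldsymbol{x}}$ from an open subset of the sphere to all of $\mathbb{S}^{d-1}$ under \eqref{e10b}, together with verifying that $g_{\phi,\boldsymbol{x}}$ genuinely lies in a quasi-analytic class stable under the operations used; for this I would lean on the machinery of \cite{GG}. A smaller but genuine point is the regularity tacitly used in \eqref{coef}, namely that $\boldsymbol{S}$ given $\boldsymbol{X}=\boldsymbol{x}$ admits a density $f_{\boldsymbol{S}|\boldsymbol{X}=\boldsymbol{x}}$ with respect to $\sigma$ that is positive on $H^+$, which should be stated alongside the theorem's hypotheses.
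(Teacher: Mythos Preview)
Your proposal is correct and follows essentially the same approach as the paper. The only organizational difference is that the paper establishes \eqref{rootident} under \eqref{e10} \emph{before} deriving \eqref{coef}: it notes directly that Theorem \ref{p1} identifies $\mathbb{E}[\phi(Y)|\boldsymbol{X}=\boldsymbol{x}]$, then \eqref{equ} together with the oddness of $g_{\phi,\boldsymbol{x}}^-$ pins down $g_{\phi,\boldsymbol{x}}^-$ on all of $\mathbb{S}^{d-1}$ (on $H^+$ from \eqref{equ}, on $-H^+$ by oddness), and the inversion of $\mathcal{H}$ on odd functions gives $r_{\phi,\boldsymbol{x}}^-$. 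For \eqref{coef} the paper uses the reflection identity $g_{\phi,\boldsymbol{x}}(-\boldsymbol{s})=\mathbb{E}[\phi(Y)|\boldsymbol{X}=\boldsymbol{x}]-g_{\phi,\boldsymbol{x}}(\boldsymbol{s})$ (from \eqref{rhs}) and splits the sphere integral into $H^+$ and $-H^+$; your route via the odd/even decomposition and \eqref{equ} is an equivalent parity computation. The treatment under \eqref{e10b} is identical in both.
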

\begin{proof} Let $\boldsymbol{x}\in\mathrm{supp}(\boldsymbol{X})$ and $\phi\in\Phi_{\boldsymbol{x}}$. Assuming \eqref{e10}, by Theorem \ref{p1}, \eqref{equ}, and the fact that $g_{\phi,\boldsymbol{x}}^-$ is odd, $g_{\phi,\boldsymbol{x}}^-$ is identified. Hence, \eqref{rootident} holds. 
By the right-hand side of \eqref{rhs}, 
$$g_{\phi,\boldsymbol{x}}(-\boldsymbol{s})=\mathbb{E}\left[\phi(Y)|\boldsymbol{X}=\boldsymbol{x}\right]-\mathbb{E}\left[\phi(Y)R|\boldsymbol{X}=\boldsymbol{x},\boldsymbol{S}=\boldsymbol{s}\right] ,$$
which yields, for all $p\in\mathbb{N}_0$,  
\begin{align*}
\int_{\mathbb{S}^{d-1}} q_{2p+1,d}(\boldsymbol{\gamma}^{\top}\boldsymbol{s})g_{\phi,\boldsymbol{x}}(\boldsymbol{s})d\sigma(\boldsymbol{s})=&2\int_{H^+} q_{2p+1,d}(\boldsymbol{\gamma}^{\top}\boldsymbol{s})\mathbb{E}\left[\phi(Y)R|\boldsymbol{X}=\boldsymbol{x},\boldsymbol{S}=\boldsymbol{s}\right]d\sigma(\boldsymbol{s})\\
&+\mathbb{E}\left[\phi(Y)|\boldsymbol{X}=\boldsymbol{x}\right]\int_{H^+} q_{2p+1,d}(-\boldsymbol{\gamma}^{\top}\boldsymbol{s})(\boldsymbol{s})d\sigma(\boldsymbol{s}),
\end{align*}
hence the moreover part.\\ 
Assuming \eqref{e10b}, by \eqref{e140} $g_{\phi,\boldsymbol{x}}$, hence $g_{\phi,\boldsymbol{x}}^-$, is identified. Hence, \eqref{rootident} holds. 
%
\end{proof}
\begin{rmrk}By taking  $\phi$ to be the function identically equal to 1,  
we obtain $f_{\boldsymbol{\boldsymbol{\Gamma}}|\boldsymbol{X}=\boldsymbol{x}}(\boldsymbol{\boldsymbol{\gamma}})$ for all $\boldsymbol{x}$ and a.e. $\boldsymbol{\gamma}$ such that $(\boldsymbol{x},\boldsymbol{\gamma})\in\mathrm{supp}(\boldsymbol{X},\boldsymbol{\Gamma})$. 
\end{rmrk}
A simple estimator of $(\mathbb{E}[\phi(Y)|\boldsymbol{X}=\boldsymbol{x},\boldsymbol{\boldsymbol{\Gamma}}=\cdot]f_{\boldsymbol{\Gamma}}(\cdot))^-$ on a grid of $\boldsymbol{\gamma}$ on $\mathbb{S}^{d-1}$ under \eqref{e10} takes the form
$$\sum_{p=1}^T
\frac{\hat{c}_{2p+1}(\boldsymbol{\gamma})}{\lambda_{2p+1,d}},$$
where $\hat{c}_{2p+1}(\boldsymbol{\gamma})$ are estimators of the integrals in \eqref{coef} and $T$ is a smoothing parameter. 
This yields $\mathbb{E}[\phi(Y)|\boldsymbol{X}=\boldsymbol{x},\boldsymbol{\boldsymbol{\Gamma}}=\cdot]f_{\boldsymbol{\Gamma}}(\cdot)$ using a plug-in and 
\eqref{eInv0}.  A useful choice of $\phi$ for Algorithm \ref{algmsim} is $1\{.\le t\}$ for $t$ on a grid on $\mathbb{R}$. \begin{algorithm}\label{algmsphere1}
$\hat{c}_{2p+1}(\boldsymbol{\gamma})$, for all $p=1,\dots,T$, are obtained as follows:
\begin{enumerate}
\item Compute $\mathbb{E}\left[\phi(Y)|\boldsymbol{X}=\boldsymbol{x}\right]$ using a local polynomial estimator of the right-hand side of the identity in Theorem \ref{p1} and compute numerically $\int_{H^+} q_{2p+1,d}(\boldsymbol{\gamma}^{\top}\boldsymbol{s})(\boldsymbol{s})d\sigma(\boldsymbol{s})$, 
\item Form, for the observations $i=1,\dots,N$ in the sample, 
$$\frac{q_{2p+1,d}(\boldsymbol{\gamma}^{\top}\boldsymbol{S}_i)}{\hat{f}_{\boldsymbol{S}|\boldsymbol{X}=\boldsymbol{x}}(\boldsymbol{S}_i)}\phi(Y_i)R_i,$$
where $\hat{f}_{\boldsymbol{S}|\boldsymbol{X}=\boldsymbol{x}}$ is a density estimator for directional data (see, \emph{eg}, \cite{GK}),  
and estimate $\mathbb{E}\left[\left.\frac{q_{2p+1,d}(\boldsymbol{\gamma}^{\top}\boldsymbol{S})}{f_{\boldsymbol{S}|\boldsymbol{X}=\boldsymbol{x}}(\boldsymbol{S})}\phi(Y)R\right|\boldsymbol{X}=\boldsymbol{x}\right]$ using a local polynomial estimator. 
\end{enumerate}
\end{algorithm}
In the approach in \cite{GK}, there is an additional damping of the high frequencies by an infinitely differentiable filter with compact support. The needlet estimator in \cite{GlP} also builds on this idea. In the case of the estimation of $f_{\boldsymbol{\boldsymbol{\Gamma}}|\boldsymbol{X}=\boldsymbol{x}}$, \cite{GlP} provides the minimax lower bounds for more general losses and an adaptive estimator based on thresholding the coefficients of a needlet expansion with a data driven level of hard thresholding. 

Building an estimator based on \eqref{e10b}, Hilbert space techniques, and assuming analyticity is an ongoing project. 

To perform Algorithm \ref{algmsim}, it is not 
useful to estimate the whole $\mathbb{E}[\phi(Y)|\boldsymbol{X}=\boldsymbol{x},\boldsymbol{\boldsymbol{\Gamma}}=\cdot]f_{\boldsymbol{\Gamma}}(\cdot)$. Rather, the estimator $\mathbb{E}\left[\phi(Y)|\boldsymbol{X}=\boldsymbol{x}\right]$ and local polynomial estimators are enough to obtain the elements in \eqref{e6b}.

\subsection{Alternative Scaling Under a Weak Version of Monotonicity}
In this section, we still assume \eqref{modRC} and $\boldsymbol{Z}$ is independent of 
$\left(A,\boldsymbol{B}^\top,Y\right)$ given $\boldsymbol{X}$ (\eqref{Px} under the previous normalization). 
We maintain as well 
\begin{equation}\label{Px}
\text{For all }\boldsymbol{x}\in\mathrm{supp}(\boldsymbol{X}),\ \exists \boldsymbol{P}_{\boldsymbol{x}}\in GL(d-1):\  \left(\boldsymbol{P}_{\boldsymbol{x}}^{\top}\boldsymbol{B}\right)_1> 0\ a.s.,
\end{equation}
where $GL(d-1)$ the general linear group over $\mathbb{R}^{d-1}$. 

Under this assumption we can rewrite the model as follows. We denote by $V=(\boldsymbol{P}_{\boldsymbol{x}}^{-1}\boldsymbol{Z})_1$, $\overline{\boldsymbol{Z}}=(\boldsymbol{P}_{\boldsymbol{x}}^{-1}\boldsymbol{Z})_{2,\dots,d-1}$, 
$\Theta=-A/\left(\boldsymbol{P}_{\boldsymbol{x}}^{\top}\boldsymbol{B}\right)_1$, and $\overline{\boldsymbol{\Gamma}}=-\left(\boldsymbol{P}_{\boldsymbol{x}}^{\top}\boldsymbol{B}\right)_{2,\dots,d-1}/\left(\boldsymbol{P}_{\boldsymbol{x}}^{\top}\boldsymbol{B}\right)_1$.  This yields
$$A+\boldsymbol{B}^{\top}\boldsymbol{Z}>0\Leftrightarrow V-\Theta-\overline{\boldsymbol{\Gamma}}^{\top}\overline{\boldsymbol{Z}}>0,$$
hence
\begin{equation}\label{modsr}
R=1\{V-\Theta-\overline{\boldsymbol{\Gamma}}^{\top}\overline{\boldsymbol{Z}}>0\}
\end{equation}
and 
\begin{equation}\label{modsr2}(V,\overline{\boldsymbol{Z}}^{\top}) \text{ is independent of }
(\Theta,\overline{\boldsymbol{\Gamma}}^\top,Y)\text{ given }\boldsymbol{X}.
\end{equation} 
By \eqref{modsr2}, \eqref{modsr} is equivalent to the fact that, for all $\boldsymbol{x}\in\mathrm{supp}(\boldsymbol{X})$ and $\overline{\boldsymbol{z}}\in\mathrm{supp}(\overline{\boldsymbol{Z}})$, $$v\to \mathbb{P}(R=1|\boldsymbol{X}=\boldsymbol{x},\boldsymbol{Z}=\boldsymbol{P}_{\boldsymbol{x}}(v,\ \overline{\boldsymbol{z}}^{\top})^{\top})=\mathbb{P}(\Theta+\overline{\boldsymbol{\Gamma}}^{\top}\overline{\boldsymbol{z}}<v|\boldsymbol{X}=\boldsymbol{x})$$
is a cumulative distribution, so the researcher can determine, from the distribution of the data, such an invertible matrix 
$\boldsymbol{P}_{\boldsymbol{x}}$.   

The  vector $(1\ -\Theta\ -\overline{\boldsymbol{\Gamma}}^{\top})^{\top}$ of random coefficients in the linear index $V-\Theta-\overline{\boldsymbol{\Gamma}}^{\top}\overline{\boldsymbol{Z}}$ clearly satisfies \eqref{e11}.
For this reason the specification of the previous section is more general. There is instrument monotonicity in $V$, though not for $\overline{\boldsymbol{Z}}$. This is a weak type of monotonicity because it is possible that there is instrument monotonicity for none of the instrumental variable in the original scale. This is the approach presented in the other versions of \cite{GH}. It is shown in \cite{GH} that the equation
$$
R=\indic\left\{V+f_0\left(\widetilde{\boldsymbol{Z}}\right)- \Theta  -\sum_{l=1}^{d-2}\overline{\boldsymbol{\Gamma}}_lf_l\left(\widetilde{\boldsymbol{Z}}_l\right)>0\right\},$$
where $d\ge3$, $f_0,\dots,\ f_{d-2}$ are unknown functions, can be transformed by reparametrization into \eqref{modsr} and the unknown functions are identified by similar arguments as for the additive model for a regression function.

We consider as well the following restrictions: 
\begin{equation}
\text{For all }(\boldsymbol{x}^{\top},\ \overline{\boldsymbol{z}}^{\top})\in\mathrm{supp}(\boldsymbol{X}^{\top},\ \overline{\boldsymbol{Z}}^{\top}),\ f_{\Theta,\overline{\boldsymbol{\Gamma}}|\boldsymbol{X}=\boldsymbol{x}}\text{ and }f_{\Theta+\overline{\boldsymbol{\Gamma}}^{\top}\overline{\boldsymbol{z}}|\boldsymbol{X}=\boldsymbol{x}}\text{ exist,}
\label{a00}
\end{equation}
and either
\begin{equation}
\left.
\begin{array}{ll}
&\forall (\boldsymbol{x},\overline{\boldsymbol{z}})\in\text{supp}(\boldsymbol{X},\overline{\boldsymbol{Z}}),\ 
\mathrm{supp}\left(V|\boldsymbol{X}=\boldsymbol{x},\overline{\boldsymbol{Z}}=\overline{\boldsymbol{z}}\right)\supseteq 
\mathrm{supp}(\Theta +\overline{\boldsymbol{\Gamma}}^{\top}\overline{\boldsymbol{z}}|\boldsymbol{X}=\boldsymbol{x}),\\
&\forall \boldsymbol{x}\in\text{supp}(\boldsymbol{X}),\ \text{supp}(\overline{\boldsymbol{Z}}|\boldsymbol{X}=\boldsymbol{x})=\mathbb{R}^{d-2},
\end{array}
\right\}\label{a0}
\end{equation}
or, for identifying classes $\Phi_{\boldsymbol{x}}$ and quasi-analytic classes $\mathcal{C}_{\boldsymbol{x},\overline{\boldsymbol{z}}}^a$ and $\mathcal{C}_{s,\boldsymbol{x}}^b$, denoting by
\begin{align*}
a_{\phi,\boldsymbol{x},\overline{\boldsymbol{z}}}&=\mathbb{E}[\phi(Y)|\Theta+\overline{\boldsymbol{\Gamma}}^{\top}\overline{\boldsymbol{z}}=\cdot,\boldsymbol{X}=\boldsymbol{x}]f_{\Theta+\overline{\boldsymbol{\Gamma}}^{\top}\overline{\boldsymbol{z}}|\boldsymbol{X}=\boldsymbol{x}}(\cdot),\\
b_{\phi,s,\boldsymbol{x}}&=\mathbb{E}[e^{i\Theta s}e^{i(\overline{\boldsymbol{\Gamma}}^{\top}\cdot)s}\phi(Y)|\boldsymbol{X}=\boldsymbol{x}],
\end{align*}
\begin{equation}\label{e10bb}
\left.
\begin{array}{ll}\text{For all}\ (\boldsymbol{x}^{\top},\overline{\boldsymbol{z}}^{\top})\in\text{supp}(\boldsymbol{X}^{\top},\overline{\boldsymbol{Z}}^{\top}),\ 
\mathrm{supp}\left(V|\boldsymbol{X}=\boldsymbol{x},\overline{\boldsymbol{Z}}=\overline{\boldsymbol{z}}\right)
\\
\text{and }\text{supp}(\overline{\boldsymbol{Z}}|\boldsymbol{X}=\boldsymbol{x})\text{ have nonempty interiors,}\\ 
\text{For all }\phi\in\Phi_{\boldsymbol{x}},\
a_{\phi,\boldsymbol{x},\overline{\boldsymbol{z}}}\in\mathcal{C}_{\boldsymbol{x},\overline{\boldsymbol{z}}}^a\text{ and for all }s\in\mathbb{R},\ b_{\phi,s,\boldsymbol{x}}\in\mathcal{C}_{s,\boldsymbol{x}}^b.
\end{array}
\right\}
\end{equation}
Clearly, for $a_{\phi,\boldsymbol{x},\overline{\boldsymbol{z}}}\in\mathcal{C}_{\boldsymbol{x}}$ to hold it is necessary that $f_{\Theta+\overline{\boldsymbol{\Gamma}}^{\top}\overline{\boldsymbol{z}}|\boldsymbol{X}=\boldsymbol{x}}$ is positive a.e.. A simple sufficient condition for $b_{\phi,s,\boldsymbol{x}}$ to be analytic is 
\begin{equation*}
\text{For all }\boldsymbol{x}\in\mathrm{supp}(\boldsymbol{X}),\ \exists R>0:\ \mathbb{E}\left[\left.\exp\left(R\left\|\overline{\boldsymbol{\Gamma}}\right\|\right)\right| \boldsymbol{X}=\boldsymbol{x}\right]<\infty,
\end{equation*}
This condition (which imply that $\overline{\boldsymbol{\Gamma}}$ does not have heavy tails) and the support conditions in \eqref{e10b} are slightly stronger than necessary (see \cite{GG}).


\begin{theorem}\label{p3}
Maintain \eqref{modsr}-\eqref{a00} and either \eqref{a0} or \eqref{e10bb}.  For all $\boldsymbol{x}\in\mathrm{supp}(\boldsymbol{X})$, the law of $(Y,\Theta,\overline{\boldsymbol{\Gamma}}^{\top})$ conditional on $\boldsymbol{X}=\boldsymbol{x}$ is identified.
\end{theorem}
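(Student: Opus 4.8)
The plan is to peel the model apart in two stages: first condition on $\overline{\boldsymbol{Z}}=\overline{\boldsymbol{z}}$ and reuse the single-unobservable analysis of Section~\ref{s2} to identify, for each $\overline{\boldsymbol{z}}$, the ``root'' $a_{\phi,\boldsymbol{x},\overline{\boldsymbol{z}}}$; then vary $\overline{\boldsymbol{z}}$ and run a Fourier-slice / Cram\'er--Wold argument to reconstruct the joint law. Fix $\boldsymbol{x}\in\mathrm{supp}(\boldsymbol{X})$ and $\phi\in\Phi_{\boldsymbol{x}}$ (nonnegative a.e.). For $\overline{\boldsymbol{z}}\in\mathbb{R}^{d-2}$ set $\Theta_{\overline{\boldsymbol{z}}}=\Theta+\overline{\boldsymbol{\Gamma}}^{\top}\overline{\boldsymbol{z}}$, so that \eqref{modsr} reads $R=1\{V>\Theta_{\overline{\boldsymbol{z}}}\}$ and, by \eqref{modsr2}, $V$ is independent of $(\Theta_{\overline{\boldsymbol{z}}},Y)$ given $\boldsymbol{X}$. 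Exactly as in the proof of Theorem~\ref{t0} (cf. \eqref{e0}), using that the density $f_{\Theta_{\overline{\boldsymbol{z}}}|\boldsymbol{X}=\boldsymbol{x}}$ exists by \eqref{a00}, the conditional expectation $\mathbb{E}[\phi(Y)R|\boldsymbol{X}=\boldsymbol{x},\overline{\boldsymbol{Z}}=\overline{\boldsymbol{z}},V=v]$ equals $\mathbb{E}[\phi(Y)1\{v>\Theta_{\overline{\boldsymbol{z}}}\}|\boldsymbol{X}=\boldsymbol{x}]=\int_{-\infty}^{v}a_{\phi,\boldsymbol{x},\overline{\boldsymbol{z}}}(t)\,dt$, hence is a.e.\ differentiable in $v$ with derivative $a_{\phi,\boldsymbol{x},\overline{\boldsymbol{z}}}(v)$ for $v$ in the interior of $\mathrm{supp}(V|\boldsymbol{X}=\boldsymbol{x},\overline{\boldsymbol{Z}}=\overline{\boldsymbol{z}})$. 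Under the first line of \eqref{a0} this interior contains $\mathrm{supp}(\Theta_{\overline{\boldsymbol{z}}}|\boldsymbol{X}=\boldsymbol{x})$, so $a_{\phi,\boldsymbol{x},\overline{\boldsymbol{z}}}$ is identified on all of $\mathbb{R}$; under \eqref{e10bb} it is identified on a nonempty open set and, since $a_{\phi,\boldsymbol{x},\overline{\boldsymbol{z}}}\in\mathcal{C}^a_{\boldsymbol{x},\overline{\boldsymbol{z}}}$ which is quasi-analytic, it extends uniquely, so again $a_{\phi,\boldsymbol{x},\overline{\boldsymbol{z}}}$ is identified on $\mathbb{R}$, for every $\overline{\boldsymbol{z}}\in\mathrm{supp}(\overline{\boldsymbol{Z}}|\boldsymbol{X}=\boldsymbol{x})$.

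Next I take the Fourier transform of $a_{\phi,\boldsymbol{x},\overline{\boldsymbol{z}}}$ in its argument: $\int_{\mathbb{R}}e^{ist}a_{\phi,\boldsymbol{x},\overline{\boldsymbol{z}}}(t)\,dt=\mathbb{E}[e^{is\Theta_{\overline{\boldsymbol{z}}}}\phi(Y)|\boldsymbol{X}=\boldsymbol{x}]=b_{\phi,s,\boldsymbol{x}}(\overline{\boldsymbol{z}})$. Thus $b_{\phi,s,\boldsymbol{x}}(\overline{\boldsymbol{z}})$ is identified for all $s\in\mathbb{R}$ and all $\overline{\boldsymbol{z}}\in\mathrm{supp}(\overline{\boldsymbol{Z}}|\boldsymbol{X}=\boldsymbol{x})$: under \eqref{a0} this support is $\mathbb{R}^{d-2}$; under \eqref{e10bb} it has nonempty interior and $b_{\phi,s,\boldsymbol{x}}\in\mathcal{C}^b_{s,\boldsymbol{x}}$ is quasi-analytic in $\overline{\boldsymbol{z}}$, so $b_{\phi,s,\boldsymbol{x}}$ is identified on all of $\mathbb{R}^{d-2}$ in both cases. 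Writing $b_{\phi,s,\boldsymbol{x}}(\overline{\boldsymbol{z}})=\mathbb{E}[e^{is\Theta}e^{i(s\overline{\boldsymbol{z}})^{\top}\overline{\boldsymbol{\Gamma}}}\phi(Y)|\boldsymbol{X}=\boldsymbol{x}]$ and letting $(s,\overline{\boldsymbol{z}})$ range over $\mathbb{R}\times\mathbb{R}^{d-2}$, the pair $(s,s\overline{\boldsymbol{z}})$ ranges over a dense subset of $\mathbb{R}^{d-1}$, so by continuity of characteristic functions we recover $(s,\boldsymbol{\tau})\mapsto\mathbb{E}[e^{is\Theta+i\boldsymbol{\tau}^{\top}\overline{\boldsymbol{\Gamma}}}\phi(Y)|\boldsymbol{X}=\boldsymbol{x}]$ on all of $\mathbb{R}^{d-1}$. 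This is the Fourier transform of the finite nonnegative measure $M_{\phi,\boldsymbol{x}}$ on $\mathbb{R}^{d-1}$ with density $\mathbb{E}[\phi(Y)|\Theta=\theta,\overline{\boldsymbol{\Gamma}}=\overline{\boldsymbol{\gamma}},\boldsymbol{X}=\boldsymbol{x}]f_{\Theta,\overline{\boldsymbol{\Gamma}}|\boldsymbol{X}=\boldsymbol{x}}(\theta,\overline{\boldsymbol{\gamma}})$, the density existing by \eqref{a00}; by injectivity of the Fourier transform on finite measures, $M_{\phi,\boldsymbol{x}}$, hence this density a.e., is identified. (A fully rigorous write-up may cite \cite{GG,GG2} for the quasi-analytic extension and this deconvolution step.)

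Finally, carrying this out for every $\phi\in\Phi_{\boldsymbol{x}}$ and taking $\phi\equiv1$ yields $f_{\Theta,\overline{\boldsymbol{\Gamma}}|\boldsymbol{X}=\boldsymbol{x}}$; dividing then yields $\mathbb{E}[\phi(Y)|\Theta=\theta,\overline{\boldsymbol{\Gamma}}=\overline{\boldsymbol{\gamma}},\boldsymbol{X}=\boldsymbol{x}]$ for all $\phi\in\Phi_{\boldsymbol{x}}$ at a.e.\ $(\theta,\overline{\boldsymbol{\gamma}})$. Since $\Phi_{\boldsymbol{x}}$ is an identifying class, this pins down the conditional law of $Y$ given $(\Theta,\overline{\boldsymbol{\Gamma}},\boldsymbol{X})=(\theta,\overline{\boldsymbol{\gamma}},\boldsymbol{x})$, and combined with $f_{\Theta,\overline{\boldsymbol{\Gamma}}|\boldsymbol{X}=\boldsymbol{x}}$ this is the law of $(Y,\Theta,\overline{\boldsymbol{\Gamma}}^{\top})$ conditional on $\boldsymbol{X}=\boldsymbol{x}$, as claimed.

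The step I expect to be the crux is the passage from the family $\{a_{\phi,\boldsymbol{x},\overline{\boldsymbol{z}}}\}_{\overline{\boldsymbol{z}}}$ — a Radon-transform-type object, integrating $M_{\phi,\boldsymbol{x}}$ over the hyperplanes with normal direction proportional to $(1,\overline{\boldsymbol{z}})$ — to the full measure $M_{\phi,\boldsymbol{x}}$. A set of hyperplane normals with merely nonempty interior does not in general determine the Radon transform of an arbitrary measure, so the argument must be routed through the Fourier-slice identity $\widehat{a_{\phi,\boldsymbol{x},\overline{\boldsymbol{z}}}}(s)=b_{\phi,s,\boldsymbol{x}}(\overline{\boldsymbol{z}})$ rather than a direct Radon inversion, and quasi-analyticity of $a_{\phi,\boldsymbol{x},\overline{\boldsymbol{z}}}$ (in its argument) and of $b_{\phi,s,\boldsymbol{x}}$ (in $\overline{\boldsymbol{z}}$) is invoked precisely to upgrade ``known on an open set'' to ``known everywhere'' in the weaker support regime \eqref{e10bb}. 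The remaining points — differentiability/absolute continuity needed for \eqref{e0} in the first stage, and joint measurability in $\overline{\boldsymbol{z}}$ of the objects being extended — are routine given \eqref{a00}.
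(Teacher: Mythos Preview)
Your proof is correct and follows essentially the same route as the paper's: differentiate $\mathbb{E}[\phi(Y)R\mid\boldsymbol{X}=\boldsymbol{x},V=v,\overline{\boldsymbol{Z}}=\overline{\boldsymbol{z}}]$ in $v$ to recover $a_{\phi,\boldsymbol{x},\overline{\boldsymbol{z}}}$, take its Fourier transform to obtain $b_{\phi,s,\boldsymbol{x}}(\overline{\boldsymbol{z}})$, and then invoke either the large-support condition \eqref{a0} or quasi-analyticity \eqref{e10bb} to conclude. The paper's own argument is terser---it stops at ``$b_{\phi,s,\boldsymbol{x}}$ is identified on $\mathrm{supp}(\overline{\boldsymbol{Z}}\mid\boldsymbol{X}=\boldsymbol{x})$'' and says ``we conclude using either the large support assumption or the now usual argument involving quasi-analyticity''---whereas you spell out the Fourier-slice/change-of-variables step $(s,\overline{\boldsymbol{z}})\mapsto(s,s\overline{\boldsymbol{z}})$, the density/continuity passage to all of $\mathbb{R}^{d-1}$, and the final identifying-class wrap-up, but these are exactly the details the paper is gesturing at.
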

\begin{proof}
Let $(\boldsymbol{x},\overline{\boldsymbol{z}})\in\text{supp}(\boldsymbol{X},\overline{\boldsymbol{Z}})$ and $\phi\in\Phi_{\boldsymbol{x}}$.  For all $v$ in the interior of $\mathrm{supp}(V|\boldsymbol{X}=\boldsymbol{x},\overline{\boldsymbol{Z}}=\overline{\boldsymbol{z}})$, we have
$$\partial_v\mathbb{E}\left[\left.\phi(Y)R\right|\boldsymbol{X}=\boldsymbol{x},V=v,\overline{\boldsymbol{Z}}=\overline{\boldsymbol{z}}\right]=\mathbb{E}\left[\left.\phi(Y)\right|\boldsymbol{X}=\boldsymbol{x},\Theta+\overline{\boldsymbol{\Gamma}}^{\top}\overline{\boldsymbol{z}}=v\right]f_{\Theta+\overline{\boldsymbol{\Gamma}}^{\top}\overline{\boldsymbol{z}}|\boldsymbol{X}=\boldsymbol{x}}(v).$$
So, by the assumptions, the above right-hand side is identified for all $v\in\mathbb{R}$. Hence, for all $s\in\mathbb{R}$, 
\begin{equation}\label{bphi}b_{\phi,s,\boldsymbol{x}}=\int_{\mathbb{R}}e^{isv}\mathbb{E}\left[\left.\phi(Y)\right|\boldsymbol{X}=\boldsymbol{x},\Theta+\overline{\boldsymbol{\Gamma}}^{\top}\overline{\boldsymbol{z}}=v\right]f_{\Theta+\overline{\boldsymbol{\Gamma}}^{\top}\overline{\boldsymbol{z}}|\boldsymbol{X}=\boldsymbol{x}}(v)dv
\end{equation}
is identified on $\text{supp}(\overline{\boldsymbol{Z}}|\boldsymbol{X}=\boldsymbol{x})$. We conclude using either the large support assumption or the now usual argument involving quasi-analyticity. 
\end{proof}
Based on \eqref{bphi}, it is not difficult to obtain an estimator of $b_{\phi,s,\boldsymbol{x}}$ under \eqref{a0} and then the root $\mathbb{E}[\phi(Y)|\boldsymbol{X}=\boldsymbol{x},\Theta,\overline{\boldsymbol{\Gamma}}=\cdot]f_{\Theta,\overline{\boldsymbol{\Gamma}}|\boldsymbol{X}=\boldsymbol{x}}(\cdot)$. 
\begin{algorithm}\label{algmnorm2}
\begin{enumerate}
\item Compute a local polynomial estimator of $\partial_v\mathbb{E}\left[\left.\phi(Y)R\right|\boldsymbol{X}=\boldsymbol{x},V=v,\overline{\boldsymbol{Z}}=\overline{\boldsymbol{z}}\right]$, 
\item Take a smooth numerical approximation of the Fourier transform of it, 
\item Use a smoothed multivariate inverse Fourier transform and a change of variable $(s,s\overline{\boldsymbol{z}})\to (s,\boldsymbol{z})$.
 \end{enumerate}
 \end{algorithm}
 Alternatively, \cite{GH} uses a smooth regularized inverse of the Radon transform and an integration by part. 
It is also possible to turn the identification argument based on \eqref{e10bb} 
into an estimation procedure as in \cite{GG3}. 

To perform Algorithm \ref{algmsim}, it is not 
useful to estimate the whole $\mathbb{E}[\phi(Y)|\boldsymbol{X}=\boldsymbol{x},\Theta,\overline{\boldsymbol{\Gamma}}=\cdot]f_{\Theta,\overline{\boldsymbol{\Gamma}}|\boldsymbol{X}=\boldsymbol{x}}(\cdot)$. Rather, one can estimate $\mathbb{E}\left[\phi(Y)|\boldsymbol{X}=\boldsymbol{x}\right]$ by steps 1 and 2  (for $s=0$) of Algorithm \ref{algmnorm2} and use local polynomial estimators of the remaining elements in \eqref{e6b}.

\begin{rmrk}\label{r1}
Proceeding like in \cite{GG,GG2} allows to work with an index of the form $\pi(\boldsymbol{Z},\boldsymbol{H})-V$ where $\boldsymbol{H}$ is multidimensional of arbitrary dimension and $\pi(\boldsymbol{Z},\boldsymbol{H})$ has a sparse random series expansion on some classes of functions and the conditional laws of $\boldsymbol{Z}$ and $V$, given $\boldsymbol{X}=\boldsymbol{x}$, for all $\boldsymbol{x}\in\mathrm{supp}(\boldsymbol{X})$, can have a support which is a subspace of the whole space. 
\end{rmrk}
\begin{rmrk}
In a binary treatment effect model the outcome can be written as $Y=(1-R)Y_0+RY_1$. $Y_0$ and $Y_1$ are the potential outcomes without and with treatment. They are unobservable. A selection model can be viewed as a degenerate case where $Y_0=0$ a.s.  
Quantities similar to the root in Theorem \ref{p2} have been introduced in \cite{GH}. They are for the marginals of the potential outcomes $\mathbb{E}\left[\phi(Y_j)|\boldsymbol{X}=\boldsymbol{x},\Theta=\theta,\overline{\boldsymbol{\Gamma}}=\overline{\boldsymbol{\gamma}}\right]$ for $j\in\{0,1\}$. An extension of  the Marginal Treatment Effect in \cite{HV} to multiple unobservables and for laws is the Conditional on Unobservables Distribution of Treatment Effects $\mathbb{E}\left[\phi(Y_1-Y_0)|\boldsymbol{X}=\boldsymbol{x},\Theta=\theta,\overline{\boldsymbol{\Gamma}}=\overline{\boldsymbol{\gamma}}\right]$. \cite{GH} considers kernel estimators which rely on regularized inverses of the Radon transform. 
\end{rmrk}

\section{Application to Missing Data in Surveys}\label{sec:1}
When making inference with survey data, the researcher has available data on a vector of characteristics for units belonging to a random subset $\mathcal{S}$ of a larger finite population $\mathcal{U}$. The law used to draw $\mathcal{S}$ can depend on variables available for the whole population, for example from a census.  We assume that the researcher is interested in a parameter $g$ which could be computed if we had the values of a variable $y_i$ for all units of index $i\in\mathcal{U}$. This can be an inequality index, for example the Gini index, and $y_i$ the wealth of household $i$. In the absence of missing data, the statistician can produce a confidence interval for $g$, making use of the data for the units $i\in\mathcal{S}$ and his available knowledge on the law $\mathcal{S}$. We assume that the cardinality of $\mathcal{S}$ is fixed and equal to $n$. When $g$ is a total, it is usual to rely on an unbiased estimator, an estimator of its variance, and a Gaussian approximation. 
For more complex parameters, linearization is often used to approximate moments. The estimator usually rely on the survey weights $\pi_i=1/\mathbb{P}(i\in\mathcal{S})$. For example an estimator of the Gini index is     
\begin{equation}\label{estg}
\widehat{g}\left((y_i)_{i\in\mathcal{S}}\right)=\frac{\sum_{i=1}^n(2\hat{r}(i)-1)\pi_iy_i}{\sum_{i=1}^n\pi_i\sum_{i=1}^n\pi_iy_i}-1,
\end{equation}
where $\hat{r}(i)=\sum_{j=1}^n w_j\indic\left\{y_j\le y_i\right\}$. The  estimators of the variance of the estimators are more complex to obtain and we assume there is a numerical procedure to obtain them. Inference is based on the approximation
\begin{equation}\label{norma}\widehat{g}\left((y_i)_{i\in\mathcal{S}}\right)\approx g+\sqrt{\widehat{var}\left(\widehat{g}\right)\left((y_i)_{i\in\mathcal{S}}\right)}\epsilon,
\end{equation}
where $\epsilon$ is a standard normal random variable and $\widehat{var}\left(\widehat{g}\right)\left((y_i)_{i\in\mathcal{S}}\right)$ is an estimator of the variance of $\widehat{g}\left((y_i)_{i\in\mathcal{S}}\right)$.

In practice, this is not possible when some of the $y_i$s are missing. There is a distinction between total nonresponse, where the researcher discards the data for some units $i\in\mathcal{S}$ or it is not available, and partial nonresponse. Let us ignore total nonresponse which is usually dealt with using reweighting and calibration and focus on partial nonresponse.
We consider a case where $y_i$ can be missing for some units $i\in\mathcal{S}$, while all other variables are available for all units $i\in\mathcal{S}$. 
We rely on a classical formalism where the vector of surveyed variables and of those used to draw $\mathcal{S}\subsetneq \mathcal{U}$, for each unit $i\in\mathcal{U}$, are random draws from a superpopulation. In this formalism the parameter $y_i$ for all indices $i$ of households in the population and $g$ are random and we shall now use capital letters for them.  
Let $S_i$ and $R_i$ be random variables, where $S_i=1$ if $i\in\mathcal{S}$ and $R_i=1$ if unit $i$ reveals the value of $Y_i$ given $S_i=1$, and $\boldsymbol{X}_i$ and $\boldsymbol{Z}_i$ be random vectors which will play a different role. 

It is classical to rely on imputations to handle the missing data. This means that we replace missing data by artificial values obtained from a model forming predictions or simulating from a probability law and inject them in a formula like \eqref{estg}. 
In \cite{G1} we discuss the use of the Heckman selection model when we suspect that the data is not missing at random. This relies on a parametric model for the partially missing outcome which is prone to criticism.  Also as this paper has shown such a model relies on instrument monotonicity which is an assumption which is too strong to be realistic. 

It is difficult to analyze theoretically the effect of such imputations. For example when the statistic is nonlinear in the $y_i$s (\emph{e.g.} \eqref{estg}) then using predictions can lead to distorted statistics. It is also tricky to make proper inference when one relies on imputations. One way to proceed is to rely on a hierarchical model as in \cite{G2}. There the imputation model is parametric and we adopted the Bayesian paradigm for two reasons.
The first is to account for parameter uncertainty and the second is to replace maximum likelihood with high dimensional integrals by a Monte Carlo Markov Chain Algorithm (a Gibbs sampler). The hierarchical approach also allows layers such as to model model uncertainty. The Markov chain produces sequences of values for each $Y_i$ for $i\in \mathcal{S}\setminus\mathcal{R}$ in the posterior distribution given $\left(\boldsymbol{W}_i\right)_{i\in\mathcal{S}}$, the choice of which is discussed afterwards. Subsequently we get a path of  
\begin{equation}\label{einvert}
\widetilde{G}=\widehat{G}\left(\left(Y_i\right)_{i\in\mathcal{S}}\right)+\sqrt{\widehat{var}\left(\widehat{G}\right)\left(\left(Y_i\right)_{i\in\mathcal{S}}\right)}\epsilon
\end{equation} 
where $\epsilon$ is a standard normal random variable independent from $\left(Y_i\right)_{i\in\mathcal{S}}$ given $\left(\boldsymbol{W}_i\right)_{i\in\mathcal{S}}$. \eqref{einvert} is derived from \eqref{norma}. The variables $\left(\boldsymbol{W}_i\right)_{i\in\mathcal{S}}$ are 
those making the missing mechanism corresponding to $R_i$ relative to $Y_i$ MAR\footnote{They can be those used by the survey statistician to draw $\mathcal{S}$ if any (and usually made available) to handle a total nonresponse which is MAR via imputations.}. The last $T$ values $\left(\widetilde{G}_t\right)_{t=T_0+1}^{T_0+T}$ of the sample path for $G$ allows to form credible sets $C$ by adjusting the set so that the frequency that $\left\{\widetilde{G}_t\in C\right\}$ exceeds $1-\alpha$, where $\alpha$ is a confidence level. $T_0$ is the so-called burn-in. These confidence sets account for error due to survey sampling, parameter uncertainty, and nonresponse. They can be chosen from the quantiles of the distribution, to minimize the volume of the set, etc. 

We now consider our nonparametric models of endogenous selection which allow for nonmonotonicity of the instrumental variables to handle a missing mechanism corresponding to $R$ which is NMAR. For simplicity, we assume away parameter uncertainty, which would be taken into account more easily if we adopted a Bayesian framework, and total nonresponse. The variables $\boldsymbol{X}_i$ in Section \ref{RC} can be variables that are good predictors for $Y_i$. They are not needed to obtain valid inference but can be useful to make confidence intervals smaller. 
However, the selection corresponding to the binary variables $R_i$ relative to the outcomes $Y_i$ given $S_i=1$ follow a NMAR mechanism. The (multiple) imputation approach becomes: for $t=1,\dots,T$
\begin{enumerate}
\item Draw an i.i.d. sample of $Y_i^t$ for $i\in\mathcal{S}\setminus\mathcal{R}$ from the law of $Y$ given $\boldsymbol{X}=\boldsymbol{x}_i$, $S=1$, and $R=0$, an independent standard normal $\epsilon_t$, and set $Y_i^t=y_i$ for $i\in\mathcal{R}$ where $y_i$ are the observations in the selected sample,
\item Compute 
\begin{equation}\label{einvertb}
\widetilde{G}_t=\widehat{G}\left(\left(Y_i^t\right)_{i\in\mathcal{S}}\right)+\sqrt{\widehat{var}\left(\widehat{G}\right)\left(\left(Y_i^t\right)_{i\in\mathcal{S}}\right)}\epsilon_t.
\end{equation}
\end{enumerate}
The confidence interval is formed from the sample $\left(\widetilde{G}_t\right)_{t=1,\dots,T}$ for a given confidence level. 

In practice, assuming away the conditioning on $\boldsymbol{X}$, the draws from the law of $Y$ given $S=1$, and $R=0$
can be obtained (approximately) by 
\begin{algorithm}\label{algmsim}
\begin{enumerate}
\item Take $\phi=1\{.\le t\}$ for a grid of $t$,
\item Estimate the left-hand side of \eqref{e6b} using plug-in estimators of the elements on the right-hand side from the available data (corresponding to $S=1$),
\item Draw from a uniform random variable on $[0,1]$,
\item Apply a numerical approximation of the inverse CDF from step 2.
\end{enumerate}
\end{algorithm}

\end{document}